\newtheorem{thm}{Theorem}[section]
\newtheorem{pro}[thm]{Proposition}
 \newtheorem{cor}[thm]{Corollary}
 \newtheorem{exa}[thm]{Example}
 \newtheorem{lem}[thm]{Lemma}
  \newtheorem{defn}[thm]{Definition}
\begin{document}


 \centerline{\large{\textbf{SOME APPLICATIONS FOR FADELL-DOLD THEORM     }}}
\centerline{\large{\textbf{ IN FIBRATION THEORY BY USING  }}}
\centerline{\large{\textbf{   HOMOTOPY GROUPS }}} \centerline{}

\centerline{{\emph{\textbf{Amin  Saif and Adem K{\i}l{\i}\c cman}}}}
\centerline{\footnotesize{Institute of Mathematical Research
(INSPEM), University Putra Malaysia}}
\centerline{\footnotesize{43400 UPM, Serdang, Selangor,
Malaysia}}



\begin{abstract}
\footnotesize{ The purpose of this paper is to give some solutions
for the classification problem in fibration theory by using the
homotopy sequences of fibrations (sequences of $n$-th homotopy
groups $ \pi_{n}(S,s_{o}) $ of total spaces of fibrations). In
particular, to show the role of homotopy sequence of $n$-th
homotopy to get the required fiber map in Fadell-Dold theorem such
that the restriction of this fiber map on some fiber spaces is a
homotopy equivalence.\\
\quad\\
 \emph{Keywords}:  Fibration;
    homotopy group;   homotopy equivalence.\\
\quad\\
 \emph{ AMS classification}: 55P10, 55R55,  55Q05}

\end{abstract}


\section{Introduction}

\noindent The homotopy theory of topological spaces attempts to
classify weak homotopy types of spaces and homotopy classes of
maps. The classification of maps within a homotopy is a central
problem in topology and several authors contributed in this area,
see for example the related works in \cite{kirby}.\\

\noindent The concepts of Hurewicz fibrations have played very
important roles for investigating the mutual relations of among
the objects. For this purpose Coram and Duvall \cite{coram2}
introduced an approximate fibration as a map having the
approximate homotopy lifting property for every space, which is a
generalization of a Hurewicz fibration having valuable properties
similar to the Hurewicz fibration and is widely applicable to the
maps whose fibers are nontrivial shapes. Thus it is very essential
to examine whether a given decomposition map is an approximate
fibration, for exact homotopy sequence that will provide us
structural informations about any one object by means of their
interrelations with the others, Coram and Duvall \cite{coram1}
gave several characterizations for an approximate fibration. \\

\noindent In \cite{dwyer1}, Dwyer and Kan followed the simplicial
model category and introduced weak equivalences between the
objects. Further, Dwyer and Kan in \cite{dwyer4} define a notion
of equivalence of simplicial localizations by using simplicial
sets for the diagrams, which provide an answer to the question
that posed by Quillen on the equivalence of homotopy theories in
\cite{quillen}. In fact, the category of simplicial localizations
together with this notion of equivalence gives rise to a "homotopy
theory of homotopy
theory", see \cite{dwyer}.\\

\noindent There has also been some further developments leading to
classification in the homotopy type of newer manifolds(Wall's
manifolds, Milnor manifolds, etc.) which form generators of
several different groups of manifolds, see more details
\cite{kirby}, \cite{charles} and \cite{wall} by using the $\Pi$-algebras , see \cite{blanca}.\\

\noindent Recall the problems of classifying Hurewicz fibrations
whose fibres have just two non-zero homotopy groups which are very
interesting study in homotopy theory. In what follows, $S^I$ will
denote the path apace ( with the compact-open topology) of a space
(Hausdorff space) $S$, $ \Omega(S,s_{o})$ will denote the loop
space in $S^I$ based a point $s_{o}$, $\widetilde{s}$ a constant
path  into $s\in S$, $\overline{\alpha}$ the inverse path of
$\alpha\in S^I$, $\star$ the usual path multiplication operation
and $ \simeq  $ the same
homotopy type for spaces and homotopic for maps.\\

\noindent Let $f:S\longrightarrow O$ be a fibration with a base
$O$, total space $S$ and fiber space $F_{r_{o}}=f^{-1}(r_{o})$,
where $r_{o}\in O$. A map $L_{f}: \bigtriangleup f \longrightarrow
S^I$ is called a \emph{ lifting function} for $f$ if
$L_{f}(s,\alpha)(0)=s$ and $f[L_{f}(s,\alpha)]=\alpha$ for all
$(s,\alpha)\in \bigtriangleup f$, where  $\bigtriangleup f =
\{(s,\alpha)\in S\times O^I:f(s)=\alpha(0)\}$. If $L_{f}(s,f\circ
\widetilde{s})=\widetilde{s} \mbox { } \mbox{ for all } \mbox{
}s\in S$, then   the  lifting function is called a \emph{  regular
lifting function}. A  fibration $f$ is called \emph{regular
fibration} if it has regular lifting function.\\

\noindent Recall the Curtis-Hurewicz theorem, \cite{Hur1}, which
is one of the famous theorems in fibration theory which shows that
any map is regular fibration if and only if it has regular lifting
function. One of the main problems in fibration theory is a
classification problem which is given by:\newline Under what
conditions two fibrations, over a common base, will be fiber
homotopy equivalent? Fadell-Dold theorem, \cite{Fad1}, is one of
the solutions of this problem which clarifies  that if the common
base $O$ of two fibration $f_{1}:S_{1}\longrightarrow O$ and
$f_{2}:S_{2}\longrightarrow O$ is a pathwise connected and an
absolute neighborhood retract (ANR), then $f_{1}$ and $f_{2}$ are
fiber homotopy equivalent if and only if there is a fiber map
$h:S_{1}\longrightarrow S_{2}$ such that the restriction map of
$h$ on $f^{-1}_{1}(r_{o})$ is homotopy equivalence into $
f^{-1}_{2}(r_{o})$, for some $r_{o}\in O$.\\

\noindent In general it is difficult to find the required fiber
map of Fadell-Dold theorem in Hurewicz fibration theory. Thus in
this paper, we show the role of homotopy sequence of $n-$th
homotopy groups $ \pi_{n}(S,s_{o}) $ of two fibrations  to get
this  required fiber map in Fadell-Dold theorem such that the
restriction of this fiber map on some fiber spaces is a   homotopy
equivalence.  That is, we give  some  solutions for the
classification problem by using the $n-$th homotopy groups $
\pi_{n}(S,s_{o}) $ of  total spaces of fibrations. \\

\section{Preliminaries }

\noindent For the $n-$th homotopy groups $\pi_{n}(S,s_{o})$ and
$n-$th relative homotopy groups $\pi_{n}(S,A,s_{o})$, recall
\cite{Spanier} that:
\begin{enumerate}
    \item If $h:S\longrightarrow O$ is a
    map then for a positive integer $ n> 0 $, there is a
    homomorphism     $\widehat{h}: \pi_{n}(S,s_{o})\longrightarrow
    \pi_{n}(O,h(s_{o}))$  defined by $\widehat{h}([\alpha])=[h\circ \alpha]$ . At
    $n=0$,      $\widehat{h}$ sends the path-components of $S$ into those of
     $O$. $\widehat{h}$ is called a homomorphism induced by  $h$.
    \item For a positive integer $ n> 1 $, there is a homomorphism
    (called boundary operator)    $\partial: \pi_{n}(S,A,s_{o})\longrightarrow
    \pi_{n-1}(A,s_{o})$  defined by $\partial([\alpha])=[\alpha|_{I^{n-1}\times\{0\}}]$.
     At $n=1$,
    $\alpha(I^{n})$ is a point in $A$
    which determines a path-component $C \in \pi_{0}(S,s_{o})$
    and $\partial([\alpha])=C$.
    \item $\pi_{n}(S,s_{o})$ is isomorphic to
    $\pi_{n-1}( \Omega(S,s_{o}),\widetilde{s_{o}})$, for a positive integer $n>0$.
    \item $\pi_{n}(S,s_{o})$ is abelian group for a positive integer $n>1$.
\end{enumerate}


\begin{thm}\label{a111}
\emph{ \cite{Spanier} Let  $h:(S,s_o)\longrightarrow (O,h(s_o))$
be  a homotopy equivalence. Then the induced homomorphisms
$\widehat{h}:\pi_{n}(S,s_{o})\longrightarrow \pi_{n}(O,h(s_{o}))$
are isomorphisms  for
   a positive integer $ n> 0 $.}
\end{thm}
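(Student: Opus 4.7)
The plan is to use the standard strategy: invoke a homotopy inverse $g:O\to S$ of $h$, reduce via functoriality to the fact that $\widehat{g\circ h}$ and $\widehat{h\circ g}$ are isomorphisms up to a change-of-basepoint, and conclude that $\widehat{h}$ is both injective and surjective.

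First I would record the two easy functorial properties that are implicit in the statement given just before the theorem: (i) for composable maps $u,v$ one has $\widehat{v\circ u}=\widehat{v}\circ\widehat{u}$, since $(v\circ u)\circ\alpha=v\circ(u\circ\alpha)$; and (ii) if $u_0\simeq u_1$ through a based homotopy, then $\widehat{u_0}=\widehat{u_1}$ because the homotopy of maps gives a homotopy of the representing loops $u_t\circ\alpha$. The subtlety, which is exactly where the real work lies, is that the homotopies $g\circ h\simeq \mathrm{id}_S$ and $h\circ g\simeq \mathrm{id}_O$ supplied by the homotopy equivalence are \emph{not} in general basepoint preserving, so (ii) does not apply directly.

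To handle this, I would bring in the change-of-basepoint isomorphism. Given a path $\gamma$ in $S$ from $s_o$ to $s_1$, there is a well-known isomorphism $\beta_{\gamma}:\pi_n(S,s_1)\longrightarrow\pi_n(S,s_o)$ defined (at the level of representatives $\alpha$ with $\alpha(\partial I^n)=s_1$) by conjugating $\alpha$ with $\gamma$ along a chosen coordinate, and analogously on $O$. The key lemma I would prove (or cite from Spanier) is: if $H:u_0\simeq u_1$ is an unbased homotopy of maps $(S,s_o)\to O$ and $\gamma(t)=H(s_o,t)$, then $\widehat{u_0}=\beta_{\gamma}\circ\widehat{u_1}$. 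This is a direct picture-level argument: the homotopy $H$ applied to a representative loop produces a homotopy between $u_0\circ\alpha$ and a loop obtained from $u_1\circ\alpha$ conjugated by $\gamma$.

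With this lemma in hand, the rest is clean. Apply it to the homotopy $g\circ h\simeq \mathrm{id}_S$: taking $\gamma$ to be the trace $t\mapsto H_1(s_o,t)$ of $s_o$, we obtain $\widehat{\mathrm{id}_S}=\beta_{\gamma}\circ \widehat{g\circ h}=\beta_{\gamma}\circ\widehat{g}\circ\widehat{h}$. Since $\widehat{\mathrm{id}_S}$ and $\beta_{\gamma}$ are isomorphisms, $\widehat{g}\circ\widehat{h}$ is an isomorphism; in particular $\widehat{h}$ is injective. Applying the same argument to $h\circ g\simeq \mathrm{id}_O$ shows that $\widehat{h}\circ\widehat{g}$ is an isomorphism, so $\widehat{h}$ is surjective. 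Combined, $\widehat{h}:\pi_n(S,s_o)\to\pi_n(O,h(s_o))$ is an isomorphism for every $n>0$.

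The main obstacle, then, is not the algebra but the careful verification that the unbased homotopy intertwines the induced maps up to the basepoint-change $\beta_{\gamma}$; everything else is formal consequence of functoriality and the fact that $\beta_{\gamma}$ is an isomorphism.
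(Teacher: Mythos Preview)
Your argument is correct and is exactly the standard proof one finds in Spanier's textbook: use a homotopy inverse $g$, invoke the lemma that an unbased homotopy $u_0\simeq u_1$ gives $\widehat{u_0}=\beta_{\gamma}\circ\widehat{u_1}$ with $\gamma$ the trace of the basepoint, and conclude from functoriality that $\widehat h$ has both a left and a right inverse up to basepoint-change isomorphisms. There is nothing further to compare against, since the paper does not supply its own proof of this statement; Theorem~\ref{a111} is simply quoted from \cite{Spanier} as background, and your write-up reproduces that classical argument faithfully.
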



\noindent Recall \cite{Spanier} that if $h$ is a fibration, then
Theorem \ref{a111} remains valid. The following theorem is the
consequences of Whitehead's \cite{Whiteh} and Hurewicz's theorems,
see \cite{bredon}.

\begin{thm}\label{a112}
\emph{   Let $S$ and $O$ be simply connected spaces which are
dominated by ANR's. If there is a map $f:S\longrightarrow O$
induces isomorphism between the $n-$th homotopy groups of $S$ and
$O$, then $f$ is   homotopy equivalence.}
\end{thm}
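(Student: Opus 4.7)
The plan is to reduce the statement to the classical Whitehead theorem for CW complexes. The central observation is that spaces dominated by ANRs automatically have the homotopy type of CW complexes, so the hypothesis on $f$ translates into a question about maps between CW complexes that induce isomorphisms on every $\pi_{n}$.

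First, I would invoke Milnor's theorem that every ANR has the homotopy type of a CW complex, together with the fact that a space dominated by a space of CW homotopy type itself has CW homotopy type. This yields CW complexes $S'$ and $O'$ and homotopy equivalences $\phi_{S}: S' \longrightarrow S$ and $\phi_{O}: O \longrightarrow O'$. I then form the composite $f' = \phi_{O} \circ f \circ \phi_{S}: S' \longrightarrow O'$ and verify, using Theorem \ref{a111} and the functoriality of the induced-map construction, that $\widehat{f'} = \widehat{\phi_{O}} \circ \widehat{f} \circ \widehat{\phi_{S}}$ is an isomorphism on $\pi_{n}$ for every positive $n$, since each of the three factors is.

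Since $S'$ and $O'$ are simply connected CW complexes and $f'$ is a weak homotopy equivalence, Whitehead's theorem gives that $f'$ is a homotopy equivalence. One may alternatively reach this step by combining Hurewicz's theorem — which in the simply-connected setting converts the $\pi_{n}$-isomorphisms into $H_{n}$-isomorphisms — with the homological form of Whitehead's theorem; this is presumably the route suggested by the authors' attribution. Either way, writing $g'$ for a homotopy inverse of $f'$ and $\psi_{S}, \psi_{O}$ for homotopy inverses of $\phi_{S}, \phi_{O}$, the map $g = \phi_{S} \circ g' \circ \phi_{O}: O \longrightarrow S$ satisfies $g \circ f \simeq \mathrm{id}_{S}$ and $f \circ g \simeq \mathrm{id}_{O}$ by a direct composition check, so $f$ is itself a homotopy equivalence.

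The main obstacle is the first step, the reduction from ANR-domination to CW homotopy type, because it rests on non-trivial results of Milnor rather than on the homotopy-group machinery developed in the preliminaries; the simple-connectedness hypothesis, while essential for the Hurewicz route, plays no role in this opening reduction. Once that bridge is in place the remainder is routine, amounting to transporting a known theorem for CW complexes across two homotopy equivalences.
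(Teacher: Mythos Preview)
Your sketch is correct and aligns with the paper's treatment: the paper does not supply its own proof of this theorem but merely records it as a consequence of Whitehead's and Hurewicz's theorems, citing Bredon. Your proposal fills in exactly the details one would expect behind that attribution---replacing the ANR-dominated spaces by CW complexes via Milnor, then invoking Whitehead (optionally through Hurewicz in the simply connected case)---so there is nothing to correct or contrast.
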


\noindent Basically, Whitehead's Theorem says that for
CW-complexes, if a map $f : X \to Y$ induces an isomorphism on all
homotopy groups then it is a homotopy equivalence. But, as the
example above shows, you need the map. Such a map is called a weak
homotopy equivalence. We note that Whitehead's Theorem is not true
for spaces wilder than CW-complexes for example, the Warsaw circle
has all of its homotopy groups trivial but the unique map to a
point is not a homotopy equivalence.

\begin{thm}\label{a113}
\emph{ \cite{Casson} Let $f:S\longrightarrow O$ be a   fibration
and  a fiber space $F_{r_{o}}$ be a pathwise connected ANR for
some $r_{o}\in O$. If $ \Omega(O,r_{o})\simeq ANR$, then $
\Omega(S,s_{o})$ is dominated by ANR for any $s_{o}\in F_{r_{o}}$.
If $S$ is a simply connected then $ \Omega(S,s_{o})$ is of the
same homotopy type with ANR space.}
\end{thm}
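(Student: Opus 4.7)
The plan is to apply the loop functor to the given fibration $f:S\longrightarrow O$ and then analyse the resulting looped fibration. Since $f$ is a Hurewicz fibration, looping produces a Hurewicz fibration
\[
\Omega f : \Omega(S,s_o) \longrightarrow \Omega(O,r_o)
\]
whose fiber over the constant loop $\widetilde{r_o}$ is exactly $\Omega(F_{r_o},s_o)$. This reduces the problem to understanding the homotopy type of the total space of a fibration whose fiber and base I can control via the hypotheses.

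First I would establish that both the fiber $\Omega(F_{r_o},s_o)$ and the base $\Omega(O,r_o)$ of $\Omega f$ are dominated by ANR's. For the base this is immediate from the assumption $\Omega(O,r_o)\simeq ANR$. For the fiber I would appeal to the fact that a pathwise connected ANR has the homotopy type of a CW complex, so by Milnor's theorem on loop spaces of CW complexes, $\Omega(F_{r_o},s_o)$ also has CW homotopy type and is therefore dominated by an ANR.

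Second, I would invoke the classical theorem that in a Hurewicz fibration whose fiber and base both have CW (equivalently, ANR-dominated) homotopy type, the total space has CW homotopy type as well. Applied to $\Omega f$ this yields that $\Omega(S,s_o)$ is dominated by an ANR, which is the first conclusion. Concretely, one can pull back $\Omega f$ along a homotopy equivalence $Y \to \Omega(O,r_o)$ from an ANR $Y$ and then use the fact that a fibration over an ANR with ANR-dominated fiber has ANR-dominated total space.

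For the second assertion, if $S$ is simply connected then $\pi_{0}(\Omega(S,s_o)) = \pi_{1}(S,s_o)=0$, so $\Omega(S,s_o)$ is path connected. In this setting the domination obtained above can be upgraded: taking the dominating ANR $Y$ with maps $r\colon Y\to \Omega(S,s_o)$ and $s\colon \Omega(S,s_o)\to Y$ satisfying $r\circ s\simeq \mathrm{id}$, the induced maps on homotopy groups are split, and by using Theorem \ref{a111} together with Theorem \ref{a112} applied to a suitable map realising the homotopy groups, one obtains a two-sided homotopy equivalence between $\Omega(S,s_o)$ and an ANR. I expect the main obstacle to be this last step, namely the sharpening from ``dominated by'' to ``of the same homotopy type as'' an ANR, since in general that requires a finiteness or fundamental-group condition and here must be extracted cleanly from the simple connectivity of $S$.
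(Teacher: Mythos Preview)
The paper does not give a proof of this theorem: it is listed in the preliminaries with a citation to \cite{Casson} and is used as a black box thereafter. There is therefore no argument in the paper to compare your proposal against.

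On its own merits, your outline is essentially the standard route and is broadly sound: loop the fibration to obtain a Hurewicz fibration $\Omega f:\Omega(S,s_o)\to\Omega(O,r_o)$ with fiber $\Omega(F_{r_o},s_o)$, use Milnor's theorem to get CW/ANR homotopy type for the fiber, and then use that a Hurewicz fibration with ANR-dominated base and fiber has ANR-dominated total space. One genuine gap, which you yourself flagged, is the final step. You propose to invoke Theorem~\ref{a112}, but that result as stated requires both spaces to be simply connected, whereas the hypothesis that $S$ is simply connected only gives that $\Omega(S,s_o)$ is path connected; it need not be simply connected (that would require $\pi_2(S)=0$). The correct upgrade from ``dominated by an ANR'' to ``homotopy equivalent to an ANR'' here comes from Milnor's or Stasheff's theorem on CW homotopy type of loop spaces and of total spaces of fibrations, not from a Whitehead-type argument, so the reference to Theorems~\ref{a111} and~\ref{a112} should be replaced by a direct appeal to those results.
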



\noindent The definition of homotopy sequence of a fibration
$f:S\longrightarrow O$ is given as follow:\newline Let $s_{o}\in
F_{r_{o}}$. We  can consider $f$ as a map of   a triple
$(S,F_{r_{o}},s_{o})$ into a pair $(O,r_{o})$. Then the following
sequence   is called a \emph{homotopy  sequence} of a fibration
$f$:
\begin{eqnarray*}
\xymatrix{...\pi_{n}(S,s_{o})\ar@{->}[r]^{\widehat{f}}
&\pi_{n}(O,r_{o})\ar@{->}[r]^{\partial_{\bullet}\quad}&\pi_{n-1}(F_{r_{o}},s_{o})\ar@{->}[r]^{\widehat{i}}&\pi_{n-1}(S,s_{o})...}\\
\xymatrix{...\pi_{2}(O,r_{o})\ar@{->}[r]^{\partial_{\bullet}}
&\pi_{1}(F_{r_{o}},s_{o})\ar@{->}[r]^{\widehat{i}}&\pi_{1}(S,s_{o})\ar@{->}[r]^{\widehat{f}}&\pi_{1}(O,r_{o})}\\
\xymatrix{\ar@{->}[r]^{\partial_{\bullet}\qquad}&\pi_{0}(F_{r_{o}},s_{o})\ar@{->}[r]^{\widehat{i}}&\pi_{0}(S,s_{o}),}
\end{eqnarray*}
where $i,j$ are inclusion maps and
$\partial_{\bullet}=\partial\circ (\widehat{f})^{-1}$.
 Recall \cite{Hur2} that this sequence is exact, that is, the kernel of each
homomorphism is equal to the image of the previous one.

\begin{thm}\label{a114}
\emph{   \cite{Kees}  Let $f:S\longrightarrow O$ be a fibration
with pathwise connected space $O$. Then $ f^{-1}(r_{1})$ and $
f^{-1}(r_{2})$ are of the same homotopy type for any
$r_{1},r_{2}\in O$.}
\end{thm}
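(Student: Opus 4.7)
The plan is to exhibit an explicit homotopy equivalence between $f^{-1}(r_1)$ and $f^{-1}(r_2)$ built from path lifting. Since $f$ is a Hurewicz fibration, by the Curtis--Hurewicz theorem it admits a lifting function $L_f : \bigtriangleup f \longrightarrow S^I$. Pathwise connectedness of $O$ supplies a path $\alpha \in O^I$ with $\alpha(0)=r_1$ and $\alpha(1)=r_2$; its inverse $\overline{\alpha}$ runs from $r_2$ to $r_1$.

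The first step is to define, for each path $\gamma$ in $O$ from $r$ to $r'$, a continuous map $\phi_\gamma : f^{-1}(r) \longrightarrow f^{-1}(r')$ by $\phi_\gamma(s) = L_f(s,\gamma)(1)$. Continuity follows from the continuity of $L_f$ and the evaluation map $S^I \to S$ at $1$, and $f(\phi_\gamma(s)) = \gamma(1) = r'$ places the image in the correct fiber. Applying this to $\alpha$ and to $\overline{\alpha}$ produces candidate maps $\phi_\alpha$ and $\phi_{\overline{\alpha}}$ between the two fibers.

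The heart of the argument is to show $\phi_{\overline{\alpha}}\circ \phi_\alpha \simeq \mathrm{id}_{f^{-1}(r_1)}$ and $\phi_\alpha\circ \phi_{\overline{\alpha}} \simeq \mathrm{id}_{f^{-1}(r_2)}$. I would first observe that the concatenated lift $t \mapsto L_f(s,\alpha) \star L_f(\phi_\alpha(s),\overline{\alpha})$ is a lift of $\alpha \star \overline{\alpha}$ starting at $s$, so up to the standard reparametrization homotopy it represents $\phi_{\alpha \star \overline{\alpha}}(s)$; hence $\phi_{\overline{\alpha}}\circ \phi_\alpha \simeq \phi_{\alpha \star \overline{\alpha}}$. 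Next, using that $\alpha\star\overline{\alpha}$ is path-homotopic to the constant path $\widetilde{r_1}$, I would apply the homotopy lifting property of $f$ to the homotopy $H : f^{-1}(r_1)\times I \times I \to O$, $H(s,t,u)=F(t,u)$, where $F$ is a chosen path homotopy $\alpha\star\overline{\alpha}\simeq \widetilde{r_1}$ rel endpoints. A lift $\widetilde{H}$ starting from the inclusion $f^{-1}(r_1)\hookrightarrow S$ produces a homotopy from $\phi_{\alpha\star\overline{\alpha}}$ to $\phi_{\widetilde{r_1}}$ through maps with image in $f^{-1}(r_1)$ (because the endpoints of the homotopy $F$ remain at $r_1$). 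Finally, the assignment $(s,t)\mapsto L_f(s,\widetilde{r_1})(t)$ is a continuous homotopy inside $f^{-1}(r_1)$ joining $\mathrm{id}_{f^{-1}(r_1)}$ to $\phi_{\widetilde{r_1}}$, since the whole lifted path sits over the constant path at $r_1$. Stringing these together yields $\phi_{\overline{\alpha}}\circ \phi_\alpha \simeq \mathrm{id}_{f^{-1}(r_1)}$; the symmetric argument starting from $\overline{\alpha}\star\alpha\simeq \widetilde{r_2}$ gives the other composition.

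The main technical obstacle is the middle step: verifying that path-homotopic paths in $O$ produce homotopic fiber transport maps. This is where one must carefully combine the homotopy lifting property with the continuity of $L_f$ as a map on $\bigtriangleup f$, and keep track of the fact that although the values of the homotopy $\widetilde{H}$ need not remain in a single fiber throughout, at the terminal endpoint they do, which is all that is required to conclude homotopy equivalence of $f^{-1}(r_1)$ and $f^{-1}(r_2)$.
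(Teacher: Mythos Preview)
The paper does not supply its own proof of this theorem: it appears in the Preliminaries section with the citation \cite{Kees} and is quoted as a known result, so there is nothing in the paper to compare your argument against line by line.

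Your outline is the standard proof that fibers of a Hurewicz fibration over a path-connected base are homotopy equivalent, and the strategy is sound. One point deserves tightening. When you write that you lift the homotopy $H$ ``starting from the inclusion $f^{-1}(r_1)\hookrightarrow S$'', the initial data for the homotopy lifting property must be a lift of the whole slice $u=0$, i.e.\ a map $f^{-1}(r_1)\times I\to S$ covering $(s,t)\mapsto(\alpha\star\overline{\alpha})(t)$; the natural choice is $(s,t)\mapsto\bigl[L_f(s,\alpha)\star L_f(\phi_\alpha(s),\overline{\alpha})\bigr](t)$, not merely the inclusion of the fiber. With that correction the lifted homotopy $\widetilde{H}$ restricts on the faces $t=0$, $t=1$, and $u=1$ to maps landing in $f^{-1}(r_1)$ (since $F$ is a path homotopy rel endpoints), and concatenating these three faces gives the required homotopy $\phi_{\overline{\alpha}}\circ\phi_\alpha\simeq \mathrm{id}_{f^{-1}(r_1)}$ inside the fiber. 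The intermediate step you wrote as $\phi_{\overline{\alpha}}\circ\phi_\alpha\simeq\phi_{\alpha\star\overline{\alpha}}$ is then unnecessary, which also avoids the implicit appeal to uniqueness-up-to-homotopy of lifts that you flagged as the main obstacle.
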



\begin{lem}\label{a115}
\emph{\cite{Spanier} Let   $R_{i},M_{j}:S^I\longrightarrow S^I$ be
maps, where $i,j=1,2,3,4$, which are defined by
$R_{1}(\alpha)=\widetilde{\alpha(0)}$,
$M_{1}(\alpha)=\alpha\star\overline{\alpha}$,
     $R_{2}(\alpha)=\widetilde{\alpha(1)}$,  $
M_{2}(\alpha)=\overline{\alpha}\star\alpha $,
$M_{3}(\alpha)=\alpha$, $R_{3}(\alpha)=\widetilde{\alpha(0)}\star
\alpha$, $M_{4}(\alpha)=\alpha$ and  $
R_{4}(\alpha)=\alpha\star\widetilde{\alpha(1)}$ for all $\alpha\in
S^I$. Then $R_{1}$, $M_{1}$ are homotopic by homotopy $H$ which
has the following property:
\begin{eqnarray}\label{eeq1}
[H(\alpha,r)](1) =\alpha(0) \quad\mbox{ for }\mbox{ }r\in I,
\alpha\in S^I,
  \end{eqnarray}
  and $R_{i}$, $M_{i}$, ($i=2,3,4$),
  are homotopic by homotopy $G_i$ which
has the following property:
\begin{eqnarray}\label{eeq2}
[G_i(\alpha,r)](1)=\alpha(1) \quad\mbox{ for }\mbox{ }r\in I,
\alpha\in S^I.
  \end{eqnarray}   }
\end{lem}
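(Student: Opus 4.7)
The plan is to construct each homotopy by an explicit piecewise formula that reparametrizes $\alpha$ along a shrinking or widening subinterval, and then to check the endpoint conditions directly. Working in the compact--open topology, a map $H:S^I\times I\longrightarrow S^I$ is continuous if and only if its adjoint $\widehat{H}:S^I\times I\times I\longrightarrow S$, $(\alpha,r,t)\mapsto [H(\alpha,r)](t)$, is continuous; in each of the four cases the formula I will write down is a continuous function of $(\alpha,r,t)$, so continuity of the homotopy itself will come for free.

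For the first homotopy, define
\begin{equation*}
[H(\alpha,r)](t)=\begin{cases}\alpha(2rt),& 0\le t\le \tfrac{1}{2},\\ \alpha(2r(1-t)),& \tfrac{1}{2}\le t\le 1.\end{cases}
\end{equation*}
The two pieces agree at $t=\tfrac{1}{2}$ (both give $\alpha(r)$), at $r=0$ the formula reduces to the constant path $\widetilde{\alpha(0)}=R_1(\alpha)$, at $r=1$ it reduces to $\alpha\star\overline{\alpha}=M_1(\alpha)$, and setting $t=1$ gives $\alpha(0)$ for every $r$, which is exactly property (\ref{eeq1}). By symmetry, for $G_2$ I will use the same template applied to the reversed path, namely $[G_2(\alpha,r)](t)=\alpha(1-2rt)$ on $[0,\tfrac{1}{2}]$ and $\alpha(1-2r(1-t))$ on $[\tfrac{1}{2},1]$; this collapses $M_2$ to $R_2$ while keeping $t=1$ evaluated at $\alpha(1)$, giving (\ref{eeq2}).

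For the third case I widen the $\alpha$-portion of $\widetilde{\alpha(0)}\star\alpha$ as $r$ runs from $0$ to $1$ by setting
\begin{equation*}
[G_3(\alpha,r)](t)=\begin{cases}\alpha(0),& 0\le t\le \tfrac{1-r}{2},\\ \alpha\!\left(\dfrac{2t-(1-r)}{1+r}\right),& \tfrac{1-r}{2}\le t\le 1.\end{cases}
\end{equation*}
At $r=0$ this recovers $R_3(\alpha)=\widetilde{\alpha(0)}\star\alpha$, at $r=1$ the first piece collapses and the second becomes $\alpha(t)=M_3(\alpha)(t)$, and at $t=1$ the argument simplifies to $\alpha(1)$, confirming (\ref{eeq2}). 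The fourth homotopy is the mirror construction, widening the $\alpha$-portion of $\alpha\star\widetilde{\alpha(1)}$ to the whole interval, namely $[G_4(\alpha,r)](t)=\alpha(2t/(1+r))$ on $[0,(1+r)/2]$ and $\alpha(1)$ on $[(1+r)/2,1]$; the same bookkeeping shows $G_4(\alpha,0)=R_4(\alpha)$, $G_4(\alpha,1)=M_4(\alpha)=\alpha$, and $[G_4(\alpha,r)](1)=\alpha(1)$.

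The only genuine nuisance is verifying well-definedness of the piecewise formulas on the overlap $t=(1\pm r)/2$ and confirming joint continuity in $(\alpha,r,t)$; the former is an elementary substitution and the latter reduces to continuity of the evaluation map $S^I\times I\longrightarrow S$, $(\alpha,s)\mapsto\alpha(s)$, which holds because $I$ is locally compact Hausdorff. Once these observations are in place the four conclusions follow immediately, and the endpoint identities (\ref{eeq1}) and (\ref{eeq2}) are read off by plugging $t=1$ into each formula.
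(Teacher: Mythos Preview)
Your proof is correct. The paper does not supply its own proof of this lemma: it is quoted directly from Spanier and left unproved, so there is nothing to compare against beyond noting that your explicit reparametrization formulas are precisely the standard ones one would reconstruct from Spanier's treatment of path multiplication, inverses, and constant paths, and that your verification of the endpoint conditions \eqref{eeq1} and \eqref{eeq2} at $t=1$ is exactly what the paper needs later in the proof of Theorem~\ref{a1110}.
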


\begin{lem}\label{a116}
\emph{  \cite{Gray} Consider   Figure 1 which involves abelian
groups and homomorphisms}
\begin{figure}[h!]
   \begin{center}
 \begin{displaymath} \xymatrix{
 G_{1} \ar[rr]^{h_{1}} \ar[d]^{\psi_{1}}&&
           G_{2}\ar[rr]^{h_{2}}\ar[d]^{\psi_{2}}&& G_{3}\ar[rr]^{h_{3}}\ar[d]^{\psi_{3}}&&G_{4}\ar[rr]^{h_{4}}\ar[d]^{\psi_{4}}&& G_{5}\ar[d]^{\psi_{5}}                      \\
G'_{1}\ar[rr]_{h'_{1}}   &&G'_{2}\ar[rr]_{h'_{2}}
&&G'_{3}\ar[rr]_{h'_{3}}
    &&G'_{4}\ar[rr]_{h'_{4}}&&
           G'_{5} }
\end{displaymath}
  \end{center}
  \vspace{-2mm}
 \centerline{Figure 1}
 \label{apic4}
\end{figure}
 \emph{ such that
$\psi_{i+1}\circ h_{i}=h'_{i}\circ \psi_{i}$ for all $i=1,2,3,4$.
If $\psi_{1},\psi_{2},\psi_{4}$ and  $\psi_{5}$ are isomorphisms,
then $\psi_{3}$ is an isomorphism.}
\end{lem}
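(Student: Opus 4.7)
The statement is the classical Five Lemma for abelian groups, so the plan is to carry out the standard diagram chase. Before starting I would observe that, as stated, the conclusion cannot hold without the additional assumption that each row is exact; this hypothesis is tacit in the paper and is satisfied in the intended application to the homotopy sequences discussed earlier. Granted exact rows, the proof splits into two independent chases: one establishing injectivity of $\psi_3$ and one establishing surjectivity.

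For injectivity, I would begin with $x \in G_{3}$ satisfying $\psi_{3}(x)=0$ and chase to the right. Commutativity of the third square gives $\psi_{4}(h_{3}(x)) = h'_{3}(\psi_{3}(x)) = 0$, and injectivity of $\psi_{4}$ forces $h_{3}(x) = 0$. By exactness of the top row at $G_{3}$, write $x = h_{2}(y)$ for some $y \in G_{2}$. Then $h'_{2}(\psi_{2}(y)) = \psi_{3}(h_{2}(y)) = 0$, so exactness of the bottom row at $G'_{2}$ yields $z \in G'_{1}$ with $h'_{1}(z) = \psi_{2}(y)$. Surjectivity of $\psi_{1}$ lifts $z$ to some $w \in G_{1}$; commutativity of the first square followed by injectivity of $\psi_{2}$ then forces $h_{1}(w) = y$, whence $x = h_{2}(h_{1}(w)) = 0$ by exactness of the top row at $G_{2}$.

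For surjectivity I would take $y' \in G'_{3}$ and chase to the left. Using surjectivity of $\psi_{4}$, pick $x \in G_{4}$ with $\psi_{4}(x) = h'_{3}(y')$. Commutativity of the fourth square plus exactness of the bottom row at $G'_{4}$ give $\psi_{5}(h_{4}(x)) = h'_{4}(h'_{3}(y')) = 0$, and injectivity of $\psi_{5}$ yields $h_{4}(x) = 0$. Exactness of the top row at $G_{4}$ then produces $z \in G_{3}$ with $h_{3}(z) = x$, and a short computation shows $h'_{3}(\psi_{3}(z) - y') = \psi_{4}(h_{3}(z)) - h'_{3}(y') = 0$. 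Exactness of the bottom row at $G'_{3}$ supplies $w' \in G'_{2}$ with $h'_{2}(w') = \psi_{3}(z) - y'$; surjectivity of $\psi_{2}$ lifts $w'$ to some $w \in G_{2}$, and commutativity of the second square finally gives $y' = \psi_{3}(z) - \psi_{3}(h_{2}(w)) = \psi_{3}(z - h_{2}(w))$.

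The real difficulty is not conceptual but organizational: at each step I must be explicit about whether I am invoking injectivity of some $\psi_{i}$, surjectivity of some $\psi_{i}$, exactness of a specific row at a specific node, or commutativity of a specific square. The subtlest moments are those in which I first lift an element through a surjective $\psi_{i}$ and then immediately apply injectivity of a neighboring $\psi_{j}$ — this happens once at $(\psi_{1},\psi_{2})$ in the injectivity chase and once at $(\psi_{2},\psi_{5})$ in the surjectivity chase — and it is in keeping these paired invocations straight that the main care is required.
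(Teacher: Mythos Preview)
Your proof is the standard Five Lemma diagram chase and is correct; you are also right that exactness of the two rows is an unstated hypothesis without which the conclusion fails, and that in the paper's application (the ladder of homotopy exact sequences in Theorem~\ref{a1115}) this hypothesis is indeed present. The paper itself gives no proof of this lemma at all---it is simply quoted from Gray \cite{Gray}---so there is nothing to compare your argument against; your chase is exactly what one finds in any reference for the Five Lemma.
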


\section{ Fibrations $\Gamma(f,s_{o})$ and $\Sigma(f)$}

\noindent In this section, we shall introduce the notions of
  fibrations $\Sigma(f )$ and
   $\Gamma(f ,s_{o})$
   which are induced by   fibration $f:S\longrightarrow O$ with
   some results about their properties.

\noindent The \emph{functors} $\Gamma$ and $\Sigma$ are defined as
follows:\newline
\[\Gamma(S,F,s_{o})=\{\alpha\in S^I: \alpha(0)=s_{o}, \mbox{ }
\alpha(1)\in F\}  \]and
\[\Sigma(S,F)=\{\alpha\in S^I: \alpha(0)\in F,\mbox{ }
\alpha(1)\in F\} \] for any   subspace $F$ of  any topological
space $S$, where  $s_{o}\in F$.\\

\noindent Let $f:S\longrightarrow O$ be a fibration with a fiber
space $F_{r_{o}}$.
 We will define   two fibrations $\Gamma(f ,s_{o})$  and  $\Sigma(f)$   on the
functors $\Gamma$ and $\Sigma$, respectively,  induced by $f$  as
follow:\newline
   $\Gamma(f ,s_{o})$ will denote the fibration
   $\Psi_{s_{o}}:\Gamma(S,F_{r_{o}},s_{o})\longrightarrow
    F_{r_{o}} $  given by
    \[\Psi_{s_{o}}(\alpha)=\alpha(1)\quad\mbox{ for}\mbox{ } \alpha\in
    \Gamma(S,F_{r_{o}},s_{o}) \] and we say $\Gamma(f ,s_{o})$ is a fibration induced by
      $f$, which has fiber space $\Psi_{s_{o}}^{-1}(s_{o})= \Omega(S,s_{o})$ over a point $s_{o}\in
    F_{r_{o}}$. \newline
$\Sigma(f)$ will be denote the fibration
$\Phi:\Sigma(S,F_{r_{o}})\longrightarrow  F_{r_{o}}\times
F_{r_{o}} $ given by
\[\Phi(\alpha)=[\alpha(0),\alpha(1)]\quad\mbox{ for}
   \mbox{ }\alpha\in
    \Gamma(S,F_{r_{o}})\] and we say $\Sigma(f)$ is a fibration induced by $f$, which has fiber space
    $\Phi^{-1}[(s_{o},s_{o})]=  \Omega(S,s_{o})$     over  a point $(s_{o},s_{o})\in  F_{r_{o}}\times F_{r_{o}}
    $.


\begin{lem}\label{a117}
Let $f:S\longrightarrow O$ be a fibration. Then the maps
$D,D_{o}:\bigtriangleup f \longrightarrow S$ defined by
\[D(s,\alpha)=L_{f}[L_{f}(s,\alpha)(1),\overline{\alpha}](1)\quad\mbox{and}\quad D_{o}(s,\alpha)=s,\]
for all $(s,\alpha)\in \bigtriangleup
f$,
 are  homotopic.
 \end{lem}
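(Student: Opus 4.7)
The plan is to build an explicit continuous homotopy from $D_{o}$ to $D$ by concatenating the two natural lifts provided by $L_{f}$ and then re-reading the resulting path as a time-dependent map on $\bigtriangleup f$. In particular, neither Lemma~\ref{a115} nor the regularity of $L_{f}$ is needed; only the continuity of the lifting function is used.

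First, I would define a map $\gamma:\bigtriangleup f\longrightarrow S^{I}$ by
\[
\gamma(s,\alpha)\;=\;L_{f}(s,\alpha)\,\star\,L_{f}\bigl[L_{f}(s,\alpha)(1),\,\overline{\alpha}\bigr].
\]
I need to check this is well-posed. Since $f\bigl(L_{f}(s,\alpha)(1)\bigr)=\alpha(1)=\overline{\alpha}(0)$, the pair $(L_{f}(s,\alpha)(1),\overline{\alpha})$ lies in $\bigtriangleup f$, so the second lift is defined; and the endpoint of the first lift equals the starting point of the second, so the concatenation $\star$ is legal. Continuity of $\gamma$ follows from continuity of $L_{f}$, of the reversal $\alpha\mapsto\overline{\alpha}$, and of path concatenation on $S^{I}$.

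Next I would compute the endpoints of this path. By the defining properties of a lifting function, $\gamma(s,\alpha)(0)=L_{f}(s,\alpha)(0)=s=D_{o}(s,\alpha)$, and $\gamma(s,\alpha)(1)=L_{f}\bigl[L_{f}(s,\alpha)(1),\overline{\alpha}\bigr](1)=D(s,\alpha)$. Then I would set $K:\bigtriangleup f\times I\longrightarrow S$ by $K\bigl((s,\alpha),t\bigr)=\gamma(s,\alpha)(t)$. Continuity of $K$ follows from continuity of $\gamma$ together with continuity of the evaluation map $S^{I}\times I\to S$ (valid since $S$ is Hausdorff and $S^{I}$ carries the compact-open topology). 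By construction, $K(\cdot,0)=D_{o}$ and $K(\cdot,1)=D$, so $K$ is the required homotopy.

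There is no serious obstacle here: the argument is a direct construction. The only bookkeeping point to keep track of is the compatibility of the two lifts at the midpoint of the concatenation so that $\gamma$ is both well-defined and jointly continuous in $(s,\alpha)$; this in turn relies only on the identities $L_{f}(s,\alpha)(0)=s$ and $f\circ L_{f}(s,\alpha)=\alpha$ coming from the definition of a lifting function. The heavier machinery of Lemma~\ref{a115} and the homotopy-lifting property will presumably be needed in the subsequent results about the induced fibrations $\Gamma(f,s_{o})$ and $\Sigma(f)$, not here.
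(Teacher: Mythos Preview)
Your argument is correct for the lemma as stated, and it is genuinely more elementary than the paper's: you simply concatenate the two lifts $L_{f}(s,\alpha)$ and $L_{f}[L_{f}(s,\alpha)(1),\overline{\alpha}]$ and read the resulting path as a homotopy. Only the defining identities $L_{f}(s,\alpha)(0)=s$ and $f\circ L_{f}(s,\alpha)=\alpha$ are used; neither regularity of $L_{f}$ nor Lemma~\ref{a115} enters.

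The paper proceeds differently. It reparametrises $\alpha$ through auxiliary paths $\alpha_{r},\alpha'_{r},\alpha''_{r}$, builds intermediate homotopies $H$ and $G$, and then assembles a homotopy $F$ whose verification at $t=0$ explicitly invokes the regularity $L_{f}(s,f\circ\widetilde{s})=\widetilde{s}$. The payoff for this extra work is recorded immediately after the proof as Equation~(\ref{eeq3}): the paper's homotopy is \emph{fiber-preserving}, i.e.\ $f\{F[(s,\alpha),t]\}=\alpha(0)$ for every $t$. Your homotopy $K$ instead satisfies $f\{K[(s,\alpha),t]\}=(\alpha\star\overline{\alpha})(t)$, which is not constant in $t$. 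That stronger property is precisely what one needs when the lemma is applied in Proposition~\ref{a118} to produce homotopies that stay inside the subspaces $\Sigma(S,F_{r_{o}})$ and $\Gamma(S,F_{r_{o}},s_{o})$. So your route buys simplicity for the lemma in isolation, while the paper's route buys a sharper conclusion that is tacitly consumed downstream; if you keep your proof, be prepared to supply a separate fiber-preserving homotopy at that later stage.
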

\begin{proof}
 For $\alpha \in O^I$ and $r\in I$,   define  paths
$\alpha_{r},\alpha'_{r}$ and $ \alpha''_{r}$ in $O$ by
\[\alpha_{r}(t)=\alpha(rt),\quad\alpha'_{r}(t)=\alpha[r+(1-r)t] \quad\mbox{and}\quad \alpha''_{r}(t)=\alpha[2r(1-t)],\]
 for all $t\in I$.
Define two homotopies $H: \bigtriangleup f \times I\longrightarrow
S$ by
\[H[(s,\alpha),t]=L_{f}[L_{f}(s,\alpha_{t})(1),\alpha'_{t}](1) \quad\mbox{ for}
   \mbox{ }t\in I, (s,\alpha)\in \bigtriangleup f,\] and a homotopy
   $G:O^I \times I\longrightarrow O^I$ by
\[ [G(\alpha,r)](t) = \left\{
  \begin{array}{c l}
    \alpha_{r}(t) & \mbox{for \quad$ 0 \leq t \leq 1/2$},\\
    \alpha''_{r}(t)  & \mbox{for\quad $1/2 \leq t \leq 1$},
  \end{array}
\right. \] for all  $\alpha\in O^I, r\in I$.
   Hence define a homotopy $F: \bigtriangleup f \times I\longrightarrow
S$ by
\[F[(s,\alpha),t]=H[(s,G(\alpha,t)),1/2] \quad\mbox{ for}
   \mbox{ } t\in I, (s,\alpha)\in \bigtriangleup f.\]
   By the  regularity for $L_{f}$     we observe that
for $(s,\alpha)\in \bigtriangleup f$,
\begin{eqnarray*}
F[(s,\alpha),1]=H[(s,G(\alpha,1)),1/2]&=&H[(s,\alpha\star\overline{\alpha}),1/2]\\
   &=& L_{f}\{K[s,(\alpha\star\overline{\alpha})_{1/2}],(\alpha\star\overline{\alpha})'_{1/2}\}(1) \\
   &=& L_{f}[K(s,\alpha),\overline{\alpha}](1) \\
   &=& L_{f}[L_{f}(s,\alpha)(1),\overline{\alpha}](1)  \\
   &=& D(s,\alpha)
\end{eqnarray*}
  for all $(s,\alpha)\in \bigtriangleup f$, and
  \begin{eqnarray*}
F[(s,\alpha),0]=H[(s,G(\alpha)),0](1/2)&=&H[(s,\widetilde{\alpha(0)}),1/2]\\
   &=& L_{f}\{K[s,\widetilde{\alpha(0)}_{1/2}],\widetilde{\alpha(0)}'_{1/2}\}(1) \\
   &=& L_{f}\{K[s,\widetilde{\alpha(0)}],\widetilde{\alpha(0)}\}(1) \\
   &=& L_{f}[L_{f}(s,f\circ\widetilde{s})(1),f\circ\widetilde{s}](1)  \\
   &=& L_{f}(s,f\circ\widetilde{s})(1)\\
   &=& s= D_{o}(s,\alpha)
\end{eqnarray*} for all $(s,\alpha)\in \bigtriangleup f$.
Hence $D$ and $D_{o}$ are  homotopic.
 \end{proof}
\noindent In the proof of Lemma  above we get that the  homotopy
$F$ has the following property:\newline
\begin{eqnarray}\label{eeq3}
f\{F[(s,\alpha),t]\}   &=&\alpha(0) \quad\mbox{ for}
   \mbox{ } (s,\alpha)\in \bigtriangleup f.
   \end{eqnarray}
\begin{pro}\label{a118}
For any fibration $f:S\longrightarrow O$ with fiber space $
F_{r_{o}}$, the following statements are true:\newline 1.
$\Sigma(S,F_{r_{o}})\simeq  \Omega(O,r_{o})\times F_{r_{o}}
$;\newline 2.  $\Gamma(S,F_{r_{o}},s_{o})\simeq  \Omega(O,r_{o})
$\quad for all $s_{o} \in F_{r_{o}} $.
\end{pro}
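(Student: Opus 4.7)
My plan for both parts is to exhibit explicit homotopy inverses built from the (regular) lifting function $L_{f}$ of $f$. For Part 2, set $\Phi:\Gamma(S,F_{r_{o}},s_{o})\to \Omega(O,r_{o})$ by $\Phi(\alpha)=f\circ\alpha$ and $\Psi:\Omega(O,r_{o})\to \Gamma(S,F_{r_{o}},s_{o})$ by $\Psi(\beta)=L_{f}(s_{o},\beta)$. Well-definedness is automatic: $\alpha\in\Gamma(S,F_{r_{o}},s_{o})$ forces $f(\alpha(0))=f(\alpha(1))=r_{o}$, so $f\circ\alpha$ is a loop at $r_{o}$; and $L_{f}(s_{o},\beta)$ starts at $s_{o}$ and ends in $f^{-1}(\beta(1))=F_{r_{o}}$. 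One direction of the composition is literally the identity: $\Phi\circ\Psi(\beta)=f\circ L_{f}(s_{o},\beta)=\beta$. Part 1 is parallel with $\Phi(\alpha)=(f\circ\alpha,\alpha(0))$ and $\Psi(\beta,s)=L_{f}(s,\beta)$, and again $\Phi\circ\Psi=\mathrm{id}_{\Omega(O,r_{o})\times F_{r_{o}}}$ since $L_{f}(s,\beta)(0)=s$ and $f\circ L_{f}(s,\beta)=\beta$.

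The substantive part is showing $\Psi\circ\Phi\simeq\mathrm{id}$, i.e.\ that a given path $\alpha$ and the canonical $L_{f}$-lift $L_{f}(\alpha(0),f\circ\alpha)$ of its base image are homotopic within the functor space. I plan to construct the homotopy by a sliding-parameter interpolation $H(\alpha,r)$ in which the prefix of base-length $r$ is progressively replaced by the corresponding $L_{f}$-lift and re-attached to the remaining tail of $\alpha$, with concatenations normalised via the path-multiplication homotopies of Lemma \ref{a115} so that $H(\alpha,0)\simeq\alpha$ and $H(\alpha,1)\simeq L_{f}(\alpha(0),f\circ\alpha)$. Crucially, the terminal point of the partial lift at time $r$ lies in the same fibre $f^{-1}((f\circ\alpha)(r))$ as $\alpha(r)$, and the fibre-wise discrepancy between them is exactly the \emph{lift-then-lift-the-reverse} configuration handled by Lemma \ref{a117}, whose contracting homotopy $F$ (with the property $f\{F[(s,\alpha),t]\}=\alpha(0)$) supplies the continuous splice needed between the two pieces.

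The main obstacle is keeping $H$ inside the functor space at every intermediate $r$. By construction $f\circ H(\alpha,r)=f\circ\alpha$, so the terminal endpoint of $H(\alpha,r)$ always lies in $f^{-1}(r_{o})=F_{r_{o}}$, and the initial endpoint $\alpha(0)$ (equal to $s_{o}$ in Part 2) is preserved throughout the sliding; hence $H$ really lands in $\Gamma(S,F_{r_{o}},s_{o})$ (respectively $\Sigma(S,F_{r_{o}})$). Continuity of $H$ in $\alpha$ follows from the continuity of $L_{f}$, composition with $f$, concatenation, reparametrization, and restriction as operations on path spaces, so once the sliding homotopy is written out explicitly the remainder of the argument is a routine substitution check, and the homotopy equivalences of both parts follow.
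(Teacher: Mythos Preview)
Your setup is exactly the paper's: the same explicit maps $\Phi,\Psi$ (the paper calls them $R,D$ in Part~2 and $N,M$ in Part~1), and the same observation that $\Phi\circ\Psi$ is literally the identity. The paper then simply invokes Lemma~\ref{a117} for $\Psi\circ\Phi\simeq\mathrm{id}$, without writing out a sliding homotopy in the proof of this proposition; the explicit sliding construction appears later, in the proof of Theorem~\ref{a1115}.

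There is, however, a small but genuine issue in your sliding construction. You propose to replace the \emph{prefix} of $\alpha$ by the $L_f$-lift and keep the tail of $\alpha$; this creates a join mismatch at time $r$ between $L_f(\alpha(0),(f\circ\alpha)_{[0,r]})(1)$ and $\alpha(r)$, which you then want to patch using the homotopy $F$ of Lemma~\ref{a117}. But $F$ has the property $f\{F[(s,\beta),t]\}=\beta(0)$, so the path it provides lives in the fibre over the \emph{initial} point of the base path, i.e.\ in $F_{r_o}$, not in $f^{-1}((f\circ\alpha)(r))$ where your splice must take place. So that particular splice does not land where you need it.

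The fix is to slide in the other direction: keep the \emph{prefix} of $\alpha$ and replace the \emph{suffix} by the $L_f$-lift starting at $\alpha(r)$,
\[
[H(\alpha,r)](t)=
\begin{cases}
\alpha(t), & 0\le t\le r,\\
L_f\bigl(\alpha(r),(f\circ\alpha)^{1-r}\bigr)(t-r), & r\le t\le 1,
\end{cases}
\]
exactly as the paper does later in Theorem~\ref{a1115}. Since $L_f(\alpha(r),-)(0)=\alpha(r)$, the two pieces agree at $t=r$ and no splice is needed; regularity of $L_f$ gives $H(\alpha,1)=\alpha$ and $H(\alpha,0)=L_f(\alpha(0),f\circ\alpha)=\Psi\Phi(\alpha)$. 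Your endpoint checks (initial point $\alpha(0)$ preserved, terminal point in $F_{r_o}$) then go through verbatim, and the rest of your argument is correct.
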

\begin{proof}
1. Define      a map $N:\Sigma(S,F_{r_{o}})\longrightarrow
 \Omega(O,r_{o})\times F_{r_{o}}$ by
\[N(\alpha)=[f\circ \alpha,\alpha(0)] \quad\mbox{for}\mbox{ }
\alpha\in \Sigma(S,F_{r_{o}}), \] and      a map $M:
\Omega(O,r_{o})\times F_{r_{o}}\longrightarrow
\Sigma(S,F_{r_{o}})$ by
\[M(\alpha,s)=L_{f}(s,\alpha) \quad\mbox{for}\mbox{ } (\alpha,s)\in
 \Omega(O,r_{o})\times F_{r_{o}}.\] Then we have that
\begin{eqnarray*}
  (N\circ M)(\alpha,s) &=& N[L_{f}(s,\alpha)] \\
   &=& \{f[L_{f}(s,\alpha)],L_{f}(s,\alpha)(0)\} \\
   &=& (\alpha,s)=id_{ \Omega(O,r_{o})\times
F_{r_{o}}}(\alpha,s)
\end{eqnarray*}
for all $(\alpha,s)\in  \Omega(O,r_{o})\times F_{r_{o}}$. That is,
$N\circ M=id_{ \Omega(O,r_{o})\times F_{r_{o}}}$. By Lemma
\ref{a117}, we have that the composition  map $M\circ
N:\Sigma(S,F_{r_{o}})\longrightarrow \Sigma(S,F_{r_{o}})$ given by
\[(M\circ N)(\alpha)=L_{f}[\alpha(0),f\circ \alpha]  \quad\mbox{for}\mbox{ } \alpha\in
\Sigma(S,F_{r_{o}}) \] is   homotopic to the identity  map
$id_{\Sigma(S,F_{r_{o}})}$. Therefore
\[\Sigma(S,F_{r_{o}})\simeq  \Omega(O,r_{o})\times F_{r_{o}}.\]
 2.
 Let $s_{o}\in
 F_{r_{o}} $.  Define      a map $R:\Gamma(S,F_{r_{o}},
s_{o})\longrightarrow  \Omega(O,r_{o})$ by
\[R(\alpha)=f\circ \alpha \quad\mbox{for}\mbox{ }
\alpha\in \Gamma(S,F_{r_{o}},s_{o}), \] and     a map $D:
\Omega(O,r_{o})\longrightarrow \Gamma(S,F_{r_{o}},s_{o})$ by
\[D(\alpha)=L_{f}(s_{o},\alpha) \quad\mbox{for}\mbox{ } \alpha\in
 \Omega(O,r_{o}).\] Then we have
\begin{eqnarray*}
  (R\circ D)(\alpha) &=& R(L_{f}(s_{o},\alpha)) \\
   &=& f[L_{f}(s_{o},\alpha)] \\
   &=& \alpha=id_{ \Omega(O,r_{o})}(\alpha)
\end{eqnarray*}  for all $\alpha\in
    \Omega(O,r_{o})$. That is, $R\circ D=id_{ \Omega(O,r_{o})}$. By Lemma \ref{a117}, we get that the composition
 map $D\circ R:\Gamma(S,F_{r_{o}},s_{o})\longrightarrow
\Gamma(S,F_{r_{o}},s_{o})$ given by
\[(D\circ R)(\alpha)=L_{f}(s_{o},f\circ \alpha)=L_{f}(\alpha(0),f\circ \alpha)  \] for all $\alpha\in
\Gamma(S,F_{r_{o}},s_{o})$ is  homotopic to the identity  map
$id_{\Gamma(S,F_{r_{o}},s_{o})}$. Therefore
\[\Gamma(S,F_{r_{o}},s_{o})\simeq  \Omega(O,r_{o}),\] for all $ s_{o}\in
 F_{r_{o}}$.
 \end{proof}
\noindent There are several fibrations $\Gamma(f,s_{o})$ according
to the number of points in $ F_{r_{o}}$. But when we let $
F_{r_{o}}$ pathwise  connected, then the set of  fiber homotopy
equivalence classes of the collection set of all these
  fibrations   will be a single. As it is clear in the
following theorem.

\begin{thm}\label{a1110}
 Let $f:S\longrightarrow O$ be a fibration
with a pathwise  connected fiber space $ F_{r_{o}}$. Then the
  fibration $\Gamma(f,s_{o})$ is determined up
to      a fiber homotopy equivalence class. That is,
$\Gamma(f,s_{o})$ and $\Gamma(f,s'_{o})$ are fiber homotopy
equivalent  for all $s_{o}, s'_{o}\in  F_{r_{o}}$.
\end{thm}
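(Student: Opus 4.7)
My approach is to use pathwise connectedness of $F_{r_o}$ to manufacture explicit inverse fiber maps by pre-concatenation with a path joining $s_o$ to $s'_o$, and then to show that the resulting compositions are fiber homotopic to the identity via homotopies that keep the value at $t=1$ fixed, in the spirit of Lemma~\ref{a115}.

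Since $F_{r_o}$ is pathwise connected, first I would fix a path $\beta\in F_{r_o}^I$ with $\beta(0)=s_o$ and $\beta(1)=s'_o$, and then define
\[
\phi:\Gamma(S,F_{r_o},s_o)\longrightarrow \Gamma(S,F_{r_o},s'_o),\qquad \phi(\alpha)=\overline{\beta}\star\alpha,
\]
together with $\psi:\Gamma(S,F_{r_o},s'_o)\longrightarrow\Gamma(S,F_{r_o},s_o)$ given by $\psi(\gamma)=\beta\star\gamma$. Both are well-defined since the starting points come out right ($s'_o$ and $s_o$ respectively) and the concatenation leaves the endpoint at $t=1$ untouched, so $\phi(\alpha)(1)=\alpha(1)\in F_{r_o}$ and similarly for $\psi$. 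In particular $\Psi_{s'_o}\circ\phi=\Psi_{s_o}$ and $\Psi_{s_o}\circ\psi=\Psi_{s'_o}$, so $\phi$ and $\psi$ are fiber maps between the fibrations $\Gamma(f,s_o)$ and $\Gamma(f,s'_o)$ over the common base $F_{r_o}$.

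Next I would produce fiber homotopies $\psi\circ\phi\simeq \mathrm{id}_{\Gamma(S,F_{r_o},s_o)}$ and $\phi\circ\psi\simeq\mathrm{id}_{\Gamma(S,F_{r_o},s'_o)}$. Unwinding, $(\psi\circ\phi)(\alpha)=\beta\star\overline{\beta}\star\alpha$, and the standard null-homotopy of $\beta\star\overline{\beta}$ to $\widetilde{s_o}$ rel endpoints, prepended to $\alpha$, yields a homotopy $K(\alpha,t)$ that acts only on the ``first half'' of the concatenated path and leaves the ``second half'' equal to a reparametrization of $\alpha$. Because $[K(\alpha,t)](1)=\alpha(1)=\Psi_{s_o}(\alpha)$ for every $t\in I$, this is a fiber homotopy for $\Gamma(f,s_o)$. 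Exchanging the roles of $\beta$ and $\overline{\beta}$ (equivalently, of $s_o$ and $s'_o$) and using the null-homotopy of $\overline{\beta}\star\beta$ to $\widetilde{s'_o}$ gives the corresponding fiber homotopy for $\phi\circ\psi$.

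The main obstacle is arranging the reparametrization carefully enough that $K$ is simultaneously continuous in $\alpha$ and preserves the endpoint at $t=1$. I would write $K$ as an explicit piecewise formula modelled on the homotopies of Lemma~\ref{a115}: on an initial subinterval of the output path, $K(\alpha,t)$ would be a rescaled version of the standard contraction of $\beta\star\overline{\beta}$ inside $F_{r_o}$, shrinking to a point as $t\to 1$; on the complementary subinterval $K(\alpha,t)$ would equal a suitably reparametrized copy of $\alpha$, expanding to cover all of $[0,1]$ as $t\to 1$. Continuity then follows from continuity of path multiplication, and the endpoint condition $[K(\alpha,t)](1)=\alpha(1)$ holds by construction, delivering the required fiber homotopy equivalence between $\Gamma(f,s_o)$ and $\Gamma(f,s'_o)$.
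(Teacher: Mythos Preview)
Your proposal is correct and follows essentially the same route as the paper: choose a path $\beta$ in $F_{r_o}$ from $s_o$ to $s'_o$, define the mutually inverse fiber maps by pre-concatenation with $\overline{\beta}$ and $\beta$, and then invoke the standard endpoint-preserving homotopies of Lemma~\ref{a115} (Equations~\eqref{eeq1} and~\eqref{eeq2}) to see that the compositions $(\beta\star\overline{\beta})\star\alpha$ and $(\overline{\beta}\star\beta)\star\alpha$ are fiber homotopic to the respective identities. The only cosmetic difference is that the paper cites Lemma~\ref{a115} directly rather than spelling out the piecewise formula for $K$ as you propose to do.
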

 \begin{proof}
 Let $s_{o}, s'_{o}\in  F_{r_{o}}$. Since  $ F_{r_{o}}$ is  a pathwise
 connected then  there is  path $\beta:I\longrightarrow
F_{r_{o}}$ between  $s_{o}$ and $s'_{o}$.
 Now let us   to define two  fiber maps, we can
define    the map $h:\Gamma(S,F_{r_{o}},s_{o})\longrightarrow
\Gamma(S,F_{r_{o}},s'_{o})$ by
\[h(\alpha)=\overline{\beta}\star\alpha \quad\mbox{for}\mbox{ }\alpha\in
    \Gamma(S,F_{r_{o}},s_{o}), \] and  a map
    $g:\Gamma(S,F_{r_{o}},s'_{o})\longrightarrow
\Gamma(S,F_{r_{o}},s_{o})$ by
\[g(\alpha)=\beta\star\alpha \quad\mbox{for}\mbox{ }\alpha\in
    \Gamma(S,F_{r_{o}},s'_{o}).\]Then we have
\[\Psi_{s'_{o}}[h(\alpha)]=(\overline{\beta}\star\alpha)(1)=\alpha(1)=\Psi_{s_{o}}(\alpha)  \]  and
    \[\Psi_{s_{o}}[g(\alpha)]=(\beta\star\alpha)(1)=\alpha(1)=\Psi_{s'_{o}}(\alpha) \] for all $\alpha\in
    \Gamma(S,F_{r_{o}},s_{o})$. That is, $h$ and $g$ are
     fiber maps.

\noindent Now from Lemma \ref{a115} and Equations \ref{eeq1},
\ref{eeq2}, we observe that the composition  map $g\circ
    h:\Gamma(S,F_{r_{o}},s_{o})\longrightarrow\Gamma(S,F_{r_{o}},s_{o})$
    given by
\[(g\circ h)(\alpha)=(\beta\star\overline{\beta})\star \alpha \quad\mbox{for}\mbox{ }\alpha\in
    \Gamma(S,F_{r_{o}},s_{o}) \] is    fiber homotopic to the
    identity  map $id_{\Gamma(S,F_{r_{o}},s_{o})}$ and  the composition  map $h\circ
    g:\Gamma(S,F_{r_{o}},s'_{o})\longrightarrow\Gamma(S,F_{r_{o}},s'_{o})$
    given by
\[(h\circ g)(\alpha)=(\overline{\beta}\star \beta)\star \alpha \quad\mbox{for}\mbox{ }\alpha\in
    \Gamma(S,F_{r_{o}},s'_{o})\] is    fiber homotopic to the
    identity  map $id_{\Gamma(S,F_{r_{o}},s'_{o})}$.  Therefore
$\Gamma(f,s_{o})$ and $\Gamma(f,s'_{o})$ are fiber homotopy
equivalent.
 \end{proof}

\noindent Here we give some concepts which will be used in the
next sections.
\begin{defn}\label{a1111}
\emph{ Let $f:S \longrightarrow O$ be a fibration with fiber space
$F_{r_{o}}=f^{-1}(r_{o})$, where $r_{o}\in O$.  By the
\emph{$Lf-$function for  fibration $f$ induced by a lifting
function $L_{f}$ } we mean a map $\Theta_{L_{f}}:
\Omega(O,r_{o})\times F_{r_{o}} \longrightarrow  F_{r_{o}} $ which
is defined by
\[\Theta_{L_{f}}(\alpha,s)=L_{f}(s,\alpha)(1)\quad \mbox{for } \mbox{ }s\in F_{r_{o}}, \alpha\in  \Omega(O,r_{o}).\]
}
\end{defn}

\noindent Henceforth, we will denote by $[S,f,O,
F_{r_{o}},\Theta_{L_{f}}]$
 the  regular fibration  $f:S \longrightarrow O$ with   an  $Lf-$function
 $\Theta_{L_{f}}:
   \Omega(O,r_{o})\times F_{r_{o}} \longrightarrow  F_{r_{o}}$, induced by the
 lifting function $L_{f} $ and with a  fiber space
$F_{r_{o}}=f^{-1}(r_{o})$, where  $r_{o}\in O$.

\begin{defn}\label{a1112}
\emph{Let $[S,f,O, F_{r_{o}},\Theta_{L_{f}}]$ be a fibration. For
$s_{o}\in F_{r_{o}}$, the map $R:  \Omega(O,r_{o}) \longrightarrow
 F_{r_{o}} $ defined by
\[R(\alpha)=\Theta_{L_{f}}(\alpha,s_{o}) \quad\mbox{for }
   \mbox{ }\alpha\in
 \Omega(O,r_{o})\] is called an \emph{$Lf-$restriction for the fibration
$f$}  and we denote it by $f^{s_{o}}$.}
\end{defn}

\begin{exa}\label{a1113}
\emph{ The first  fibration $P_{1}: O\times S \longrightarrow O$
has a regular lifting function $L_{P_{1}}:
 \bigtriangleup P_{1} \longrightarrow  (O\times S)^I$ defined
by
\[L_{P_{1}}[(b,s),\alpha](t)= (\alpha(t),s) \quad\mbox{for   }
   \mbox{ }t\in I, [(b,s),\alpha]\in \bigtriangleup L_{P_{1}}.\]
Then the $Lf-$function $\Theta_{L_{P_{1}}}$ for      fibration
$P_{1}$ induced by $L_{P_{1}}$ will be given by
\[\Theta_{L_{P_{1}}}(\alpha,x)=x \quad\mbox{for   }
   \mbox{ } x\in
F_{r_{o}}, \alpha\in  \Omega(O,r_{o}).\] The $Lf-$restriction
$P^{s_{o}}_{1}$ for the fibration $P_{1}$ will be given  by
\[P^{s_{o}}_{1}(\alpha)=s_{o}\quad \mbox{for all }\mbox{ }
\alpha\in  \Omega(O,r_{o}).\]}
\end{exa}

\begin{defn}\label{a1114}
\emph{Let  $[S_1,f_1,O,F^1_{r_{o}},\Theta_{L_{f_{1}}}]$ and
$[S_2,f_2,O,F^2_{r_{o}},\Theta_{L_{f_{2}}}]$ be two  fibrations.
The $Lf-$functions $\Theta_{L_{f_{1}}}$ and $\Theta_{L_{f_{2}}}$
are said to be \emph{conjugate }if there is $g\in
 H(F^{1}_{r_{o}},F^{2}_{r_{o}})$ such that
\[\Theta_{L_{f_{1}}}\simeq_{S}\widetilde{\overline{g}}\circ
\Theta_{L_{f_{2}}}\circ(id_{ \Omega(O,r_{o})}\times g).\] We say
that $f_{1}$ and $f_{2}$ \emph{have conjugate $Lf-$restrictions}
if there is $g\in  H(F^{1}_{r_{o}},F^{2}_{r_{o}})$  such that \[
f^{s_{o}}_{1}\simeq_{S} \widetilde{\overline{g}}\circ
f^{g(s_{o})}_{2}\] where $s_{o}\in F_{r_{o}}$,
$H(F^{1}_{r_{o}},F^{2}_{r_{o}})$ is the set of all homotopy
equivalences from $F^{1}_{r_{o}}$ into $F^{2}_{r_{o}}$ and
$\widetilde{\overline{g}}$ is the inverse homotopy of $g$. }
\end{defn}

\noindent If two fibrations  have conjugate $Lf-$functions,   they
also have conjugate $Lf-$restrictions.

\section{ Fibration  $ \Gamma(f,s_{o})$ and $Lf-$restriction }

\noindent In this section, we are going to introduce the role of
homotopy sequences   of fibrations (using $Lf-$restriction ) in
satisfying FHE between two fibrations $\Gamma(f_{1},s_{o})$ and
$\Gamma(f_{2},s'_{o})$ which are induced by two fibrations
$[S_1,f_1,O,F^1_{r_{o}},\Theta_{L_{f_{1}}}]$ and
$[S_2,f_2,O,F^2_{r_{o}},\Theta_{L_{f_{2}}}]$ over a common
base $O$.\\

\noindent In the following theorem, we show that for two
  fibrations $f_{1}$  and $f_{2}$   with  conjugate
$Lf-$restrictions $f^{s_{o}}_{1}$ and $f^{g(s_{o})}_{2}$ by $g\in
 H(F^{1}_{r_{o}},F^{2}_{r_{o}})$, there are two  fiber maps
 between two   fibrations  $\Gamma(f_{1},s_{o})$  and
$\Gamma(f_{2},g(s_{o}))$.


\begin{thm}\label{a1115}
Let $[S_1,f_1,O,F^1_{r_{o}},\Theta_{L_{f_{1}}}]$ and
$[S_2,f_2,O,F^2_{r_{o}},\Theta_{L_{f_{2}}}]$ be two   fibrations
with
  conjugate $Lf-$restrictions $f^{s_{o}}_{1}$ and
$f^{g(s_{o})}_{2}$ by $g\in H(F^{1}_{r_{o}},F^{2}_{r_{o}})$, where
$s_{o} \in F^{1}_{r_{o}} $. Then there are two fiber maps
\[h:\Gamma(S_{1},F^{1}_{r_{o}},s_{o})\longrightarrow
\Gamma(S_{2},F^{2}_{r_{o}},g(s_{o})) \] and
\[k:\Gamma(S_{2},F^{2}_{r_{o}},g(s_{o}))\longrightarrow
\Gamma(S_{1},F^{1}_{r_{o}},(\widetilde{\overline{g}}\circ
g)(s_{o}))\] over $g$ and $\widetilde{\overline{g}}$,
respectively. That is, Figure 2 is   commutative.
\begin{figure}[h!]
   \begin{center}
 \begin{displaymath} \xymatrix{
 \Gamma(S_{1},F^{1}_{r_{o}},s_{o}) \ar[rr]^h \ar[d]^{\Psi_{s_{o}}}&&
           \Gamma(S_{2},F^{2}_{r_{o}},g(s_{o}))\ar[rr]^k\ar[d]^{\Psi_{g(s_{o})}}&& \Gamma(S_{1},F^{1}_{r_{o}},(\widetilde{\overline{g}}\circ g)(s_{o}))\ar[d]^{\Psi_{(\widetilde{\overline{g}}\circ g)(s_{o})}} &    \\
  F^{1}_{r_{o}}  \ar[rr]_{g}   &&  F^{2}_{r_{o}} \ar[rr]_{\widetilde{\overline{g}}} &&
  F^{1}_{r_{o}} }
\end{displaymath}
  \end{center}
  \vspace{-2mm}
 \centerline{Figure 2}
 \label{apic5}
\end{figure}
 Further the groups
$\pi_{n}(S_{1},s_{o})$ and $\pi_{n}(S_{2},g(s_{o}))$ are
isomorphic for  a positive integer $n>1$.
\end{thm}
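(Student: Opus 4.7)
The plan is to construct $h$ and $k$ as ``lift-then-correct'' maps, using the lifting function of the opposite fibration to lift the loops $f_{1}\circ\alpha$ (resp.\ $f_{2}\circ\beta$) and using the conjugacy homotopy to patch their endpoints, and then to conclude the homotopy-group isomorphism by applying the five lemma (Lemma \ref{a116}) to the ladder of homotopy long exact sequences of the fibrations $\Gamma(f_{1},s_{o})$ and $\Gamma(f_{2},g(s_{o}))$.

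First, for $\alpha\in \Gamma(S_{1},F^{1}_{r_{o}},s_{o})$, the composite $\gamma:=f_{1}\circ\alpha$ is a loop in $O$ based at $r_{o}$, and the lift $L_{f_{2}}(g(s_{o}),\gamma)$ runs in $S_{2}$ from $g(s_{o})$ to $\Theta_{L_{f_{2}}}(\gamma,g(s_{o}))\in F^{2}_{r_{o}}$. To force $h(\alpha)(1)=g(\alpha(1))$, so that Figure 2 commutes strictly, I concatenate this lift with a correcting path $\mu_{\alpha}$ inside $F^{2}_{r_{o}}$ from $\Theta_{L_{f_{2}}}(\gamma,g(s_{o}))$ to $g(\alpha(1))$. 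A $\mu_{\alpha}$ depending continuously on $\alpha$ is manufactured from the conjugacy homotopy $f^{s_{o}}_{1}\simeq_{S}\widetilde{\overline{g}}\circ f^{g(s_{o})}_{2}$ (evaluated at $\gamma$, pushed forward by $g$, and combined with $g\circ\widetilde{\overline{g}}\simeq id$), pieced together with an auxiliary path in $F^{1}_{r_{o}}$ joining $\alpha(1)$ to the ``standard lift'' endpoint $\Theta_{L_{f_{1}}}(\gamma,s_{o})$ of the type produced by Lemma \ref{a117}. Set $h(\alpha):= L_{f_{2}}(g(s_{o}),\gamma)\star\mu_{\alpha}$, and construct $k$ symmetrically using $L_{f_{1}}$ and $\widetilde{\overline{g}}$. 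By construction $h(\alpha)(0)=g(s_{o})$ and $h(\alpha)(1)=g(\alpha(1))$, whence $\Psi_{g(s_{o})}\circ h=g\circ\Psi_{s_{o}}$; the verification for $k$ is identical, and Figure 2 commutes.

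For the homotopy-group half, I line up the homotopy long exact sequences of $\Gamma(f_{1},s_{o})$ (fiber $\Omega(S_{1},s_{o})$, base $F^{1}_{r_{o}}$) and $\Gamma(f_{2},g(s_{o}))$ (fiber $\Omega(S_{2},g(s_{o}))$, base $F^{2}_{r_{o}}$), connected by $\hat{h}$ on total spaces, $\hat{g}$ on bases, and the restriction $h|_{\Omega(S_{1},s_{o})}$ on fibers. Naturality of the long exact sequence together with the commutativity of Figure 2 makes each square commute. Since $g$ is a homotopy equivalence, $\hat{g}$ is an isomorphism by Theorem \ref{a111}. For the middle vertical $\hat{h}$: Proposition \ref{a118}(2) furnishes homotopy equivalences $R_{i}(\beta)=f_{i}\circ\beta$ from $\Gamma(S_{i},F^{i}_{r_{o}},s_{i})$ to $\Omega(O,r_{o})$, and since $\mu_{\alpha}$ lies in $F^{2}_{r_{o}}=f_{2}^{-1}(r_{o})$ the composite satisfies $R_{2}\circ h(\alpha)=\gamma\star\widetilde{r_{o}}\simeq\gamma=R_{1}(\alpha)$ by Lemma \ref{a115}; thus $R_{2}\circ h\simeq R_{1}$ and $\hat{h}$ is an isomorphism on every $\pi_{n}$. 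Applying Lemma \ref{a116} to the ladder yields an isomorphism $\pi_{n}(\Omega(S_{1},s_{o}))\to\pi_{n}(\Omega(S_{2},g(s_{o})))$ in the relevant range, which the natural identification $\pi_{n}(S,s)\cong\pi_{n-1}(\Omega(S,s))$ from the preliminaries converts into $\pi_{n}(S_{1},s_{o})\cong\pi_{n}(S_{2},g(s_{o}))$ for all $n>1$.

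The main obstacle will be the very first step: arranging the correction $\mu_{\alpha}$ to depend continuously on $\alpha$ and to degenerate to the constant path when $\alpha$ is constant, so that $h$ is a well-defined continuous base-point-preserving fiber map realizing \emph{strict} commutativity of Figure 2. Once $h$ is in hand, the homotopy-theoretic conclusion reduces to standard naturality of the homotopy long exact sequence of a fibration together with the five lemma.
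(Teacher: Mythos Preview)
Your proposal is correct and follows essentially the same approach as the paper: the paper also builds $h$ as a three-fold concatenation $[L'_{f_{2}}(\alpha)\star G(f_{1}\circ\alpha)]\star M'(\alpha)$ where $L'_{f_{2}}(\alpha)=L_{f_{2}}(g(s_{o}),f_{1}\circ\alpha)$ is your lift, $G$ is the conjugacy homotopy, and $M'(\alpha)=g\circ M(\alpha)$ is exactly your auxiliary correction path from $g(\Theta_{L_{f_{1}}}(\gamma,s_{o}))$ to $g(\alpha(1))$; it then proves $\widehat{h}$ is an isomorphism via the same comparison $N_{2}\circ h\simeq N_{1}$ with $\Omega(O,r_{o})$ using Proposition~\ref{a118}(2), and applies the five lemma (Lemma~\ref{a116}) to the ladder of long exact sequences of $\Psi_{s_{o}}$ and $\Psi_{g(s_{o})}$ exactly as you outline. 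The only cosmetic difference is that the paper constructs the endpoint path $M(\alpha)$ directly from an explicit homotopy $H$ built out of $L_{f_{1}}$ rather than by invoking Lemma~\ref{a117}, but this yields the same object.
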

 \begin{proof}
  By hypothesis,
$f^{s_{o}}_{1}\simeq \widetilde{\overline{g}}\circ
f^{g(s_{o})}_{2}$. This implies   $g\circ f^{s_{o}}_{1}\simeq
f^{g(s_{o})}_{2}$.  Then there is
 a homotopy $G: \Omega(O,r_{o})\times I\longrightarrow
F^{2}_{r_{o}}$  such that \[
G(\alpha,0)=f^{g(s_{o})}_{2}(\alpha)=\Theta_{L_{f_{2}}}(\alpha,g(s_{o}))
\] and  \[ G(\alpha,1)=(g\circ
f^{s_{o}}_{1})(\alpha)=g[\Theta_{L_{f_{1}}}(\alpha,s_{o})] \]for
all $\alpha\in  \Omega(O,r_{o})$. For $\alpha \in S_{1}^I$ and
$r\in I$, we can define a path $\alpha_{r} \in S_1^I$ by
\[ \alpha_{r}(t)= \left\{
  \begin{array}{c l}
    \alpha(t) & \mbox{for} \quad 0\leq t\leq r,\\
    \alpha(r)   & \mbox{for}\quad r\leq t \leq 1.
  \end{array}
\right. \] For $\beta = f_{1}\circ \alpha$ and $r\in I$, we can
define the path $\beta^{1-r}\in O^I$ by
\[ \beta^{1-r}(t)= \left\{
  \begin{array}{c l}
    \beta(r+t) & \mbox{for} \quad 0\leq t\leq 1-r,\\
    \beta(1)   & \mbox{for}\quad 1-r\leq t \leq 1.
  \end{array}
\right. \] Define   a homotopy $H:S_{1}^I\times I\longrightarrow
 S_{1}^I$ by
\[ [H(\alpha,r)](t)= \left\{
  \begin{array}{c l}
    \alpha_{r}(t),& \mbox{for} \quad 0\leq t\leq r,\\
    L_{f_{1}}(\alpha(r),\beta^{1-r})(t-r) , & \mbox{for}\quad r\leq t \leq 1,
  \end{array}
\right. \] for all $r\in I, \alpha\in S_{1}^I$. We get that
\[H(\alpha,0)=L_{f_{1}}(\alpha(0),f_{1}\circ
\alpha)\quad\mbox{and}\quad H(\alpha,1)=\alpha \quad\mbox{for
}\mbox{  }\alpha\in S_{1}^I.\]

\noindent For $\alpha\in \Gamma(S_{1},F^{1}_{r_{o}},s_{o})$, let
$M(\alpha)$ be a path in $ \Gamma(S_{1},F^{1}_{r_{o}},s_{o})$
defined by
\[M(\alpha)(r)=[H(\alpha,r)](1) \quad\mbox{for}\mbox{ } r\in
I.\] Hence we can define a map
$M':\Gamma(S_{1},F^{1}_{r_{o}},s_{o})\longrightarrow
(F^{2}_{r_{o}})^I$ by
\[M'(\alpha)=g[M(\alpha)] \quad\mbox{for}\mbox{ }\alpha\in
\Gamma(S_{1},F^{1}_{r_{o}},s_{o}), \] and  a map
$L'_{f_{2}}:\Gamma(S_{1},F^{1}_{r_{o}},s_{o})\longrightarrow\Gamma(S_{2},F^{2}_{r_{o}},g(s_{o}))$
by
\[L'_{f_{2}}(\alpha)=L_{f_{2}}(g(s_{o}),f_{1}\circ\alpha) \quad\mbox{for}\mbox{ }\alpha\in
\Gamma(S_{1},F^{1}_{r_{o}},s_{o}).\] Now define the
   map
$h:\Gamma(S_{1},F^{1}_{r_{o}},s_{o})\longrightarrow
\Gamma(S_{2},F^{2}_{r_{o}},g(s_{o}))$  by
\begin{eqnarray*}\label{q741}
  h(\alpha)&=&[L'_{f_{2}}(\alpha)\star G(f_{1}\circ \alpha)]\star
M'(\alpha) \quad\mbox{for}\mbox{ }\alpha \in
\Gamma(S_{1},F^{1}_{r_{o}},s_{o}).
\end{eqnarray*}
 Then $h$ is a well-defined as  a continuous map. Since
 \begin{eqnarray*}
   (\Psi_{g(s_{o})}\circ h)(\alpha)
    &=&\{[L'_{f_{2}}(\alpha)\star G(f_{1}\circ \alpha)]\star
M'(\alpha)\}(1) \\
    &=& M'(\alpha)(1)=g[M(\alpha)(1)]=g[\alpha(1)]\\
    &=& (g\circ \Psi_{s_{o}})(\alpha)
 \end{eqnarray*} for all $\alpha \in
\Gamma(S_{1},F^{1}_{r_{o}},s_{o})$. That is,
  $\Psi_{g(s_{o})}\circ h=g\circ \Psi_{s_{o}}$. Hence $h$ is fiber map over
  $g$.\\

\noindent  Now we   prove that $\pi_{n}(S_{1},s_{o})$ and
$\pi_{n}(S_{2},g(s_{o}))$ are isomorphic for  a positive integer
$n>1$.   In Figure 3, define the map
$N_{1}:\Gamma(S_{1},F^{1}_{r_{o}},s_{o})\longrightarrow
 \Omega(O,r_{o})$   by
\[N_{1}(\alpha)=f_{1}\circ \alpha \quad\mbox{for} \mbox{ } \alpha\in
\Gamma(S_{1},F^{1}_{r_{o}},s_{o}), \] and  a map
$N_{2}:\Gamma(S_{2},F^{2}_{r_{o}},g(s_{o}))\longrightarrow
 \Omega(O,r_{o})$ by \[N_{2}(\alpha)=f_{2}\circ \alpha \quad\mbox{for }
\mbox{ } \alpha\in \Gamma(S_{2},F^{2}_{r_{o}},g(s_{o})).\]
\begin{figure}[h!]
   \begin{center}
 \begin{displaymath} \xymatrix{
\Gamma(S_{1},F^{1}_{r_{o}},s_{o}) \ar[d]_{h}
           \ar[drrrrr]^{N_{1}}&\\
 \Gamma(S_{2},F^{2}_{r_{o}},g(s_{o}))\ar[rrrrr]_{N_{2}}   &&&&&  \Omega(O,r_{o}) }
\end{displaymath}
  \end{center}
  \vspace{-2mm}
 \centerline{Figure 3}
 \label{apic6}
\end{figure}
From the definition of    fiber map $h$, we observe that
$[(N_{2}\circ h)(\alpha)](t)=r_{o}$ at $t=1/4$ and  for $t\in
[0,1/4]$,
\begin{eqnarray*}
  [(N_{2}\circ h)(\alpha)](t) =[N_{2}(h(\alpha))](t) &=& f_{2}[L'_{f_{2}}(\alpha)](4t) \\
   &=&f_{2}[L_{f_{2}}(g(s_{o}),f_{1}\circ \alpha)](4t)\\
  &=&(f_{1}\circ \alpha)(4t)= [N_{1}(\alpha)](4t).
\end{eqnarray*} That is,  $(N_{2}\circ h)(\alpha)\neq N_{1}(\alpha)$ concentrated on the interval $[0,1]$ but
$(N_{2}\circ h)(\alpha)=N_{1}(\alpha)$ concentrated on the
interval $[0,1/4]$. This implies that $N_{2}\circ h\simeq N_{1}$.
Hence Figure 3 is not commutative in the usual sense but it is a
 homotopy commutative. That is,   Figure 4 is a commutative
 \begin{figure}[h!]
   \begin{center}
 \begin{displaymath} \xymatrix{
\pi_{n}(\Gamma(S_{1},F^{1}_{r_{o}},s_{o}),\widetilde{s_{o}})
\ar[d]_{\widehat{h}}
           \ar[drrrrr]^{\widehat{N_{1}}}&\\
 \pi_{n}(\Gamma(S_{2},F^{2}_{r_{o}},g(s_{o})),\widetilde{g(s_{o})})\ar[rrrrr]_{\widehat{N_{2}}}   &&&&&\pi_{n}( \Omega(O,r_{o}),\widetilde{r_{o}})}
\end{displaymath}
  \end{center}
  \vspace{-2mm}
 \centerline{Figure 4}
 \label{apic7}
\end{figure}
i.e.,
\begin{eqnarray}\label{eeq4}
 \widehat{N_{2}}\circ \widehat{h}&=&\widehat{ N_{1}}.
\end{eqnarray}

\noindent By   the part 2 in Proposition  \ref{a118},  the  maps
$N_{1}$ and $N_{2}$  are
 homotopy equivalences of
$\Gamma(S_{1},F^{1}_{r_{o}},s_{o})$ into $   \Omega(O,r_{o})$ and
of $\Gamma(S_{2},F^{2}_{r_{o}},g(s_{o}))$ into $
\Omega(O,r_{o})$, respectively. Hence
\[\widehat{N_{1}}:\pi_{n}(\Gamma(S_{1},F^{1}_{r_{o}},s_{o}),\widetilde{s_{o}})\longrightarrow
\pi_{n}( \Omega(O,r_{o}),\widetilde{r_{o}}) \] and \[\widehat{
N_{2}}:\pi_{n}(\Gamma(S_{2},F^{2}_{r_{o}},g(s_{o})),\widetilde{g(s_{o})})\longrightarrow
\pi_{n}( \Omega(O,r_{o}),\widetilde{r_{o}})\] are isomorphisms for
 a positive integer $n>0$. By Equation \ref{eeq4}, we get that
\[\widehat{h}:\pi_{n}(\Gamma(S_{1},F^{1}_{r_{o}},s_{o}))\longrightarrow
\pi_{n}(\Gamma(S_{2},F^{2}_{r_{o}},g(s_{o})))\] is an isomorphism
for  a positive integer $n>0$. Consider
\[h_{o}: \Omega(S_{1},s_{o})\longrightarrow  \Omega(S_{2},g(s_{o}))\] is a restriction of $h$ on
$\Psi_{s_{o}}^{-1}(s_{o})=  \Omega(S_{1},s_{o})$. Now we can
integrate the homotopy sequences of   fibrations $\Psi_{s_{o}}$
and $\Psi_{g(s_{o})}$ in Figure 5,
\begin{figure}[h!]
   \begin{center}
 \begin{displaymath} \xymatrix{
\ar@{..}[r]& \pi_{n+1}(\Gamma(S_{1}))
\ar[rr]^{\widehat{\Psi_{s_{o}}}} \ar[d]^{\widehat{h}}&&
           \pi_{n+1}(F^{1}_{r_{o}},s_{o})\ar[rr]^{(\partial_{1})_{\bullet}\quad}\ar[d]^{\widehat{g}}&& \pi_{n}( \Omega(S_{1},s_{o}),\widetilde{s_{o}})\ar[d]^{\widehat{h_{o}}}                       \\
\ar@{..}[r]&
\pi_{n+1}(\Gamma(S_{2}))\ar[rr]_{\widehat{\Psi_{g(s_{o})}}}
&&\pi_{n+1}(F^{2}_{r_{o}},g(s_{o}))\ar[rr]_{(\partial_{1})_{\bullet}\quad}
&&\pi_{n}( \Omega(S_{2},g(s_{o})),\widetilde{g(s_{o})})&}
\end{displaymath}
\begin{displaymath} \xymatrix{
\ar[rr]^{\widehat{j_{1}}\quad}&&\pi_{n}(\Gamma(S_{1}))\ar[rr]^{\widehat{\Psi_{s_{o}}}}\ar[d]^{\widehat{h}}&&
\pi_{n}(F^{1}_{r_{o}},s_{o})\ar@{..}[r] \ar[d]^{\widehat{g}} & \\
 \ar[rr]_{\widehat{j_{2}}\quad}
    &&\pi_{n}(\Gamma(S_{2}))\ar[rr]_{\widehat{\Psi_{g(s_{o})}}}&&
           \pi_{n}(F^{2}_{r_{o}},g(s_{o}))\ar@{..}[r]& & }
\end{displaymath}
  \end{center}
  \vspace{-2mm}
 \centerline{Figure 5}
 \label{apic8}
\end{figure}
\newline
 where $j_{1}, j_{2}$ are inclusion
 maps, $\partial_{1}$ and $\partial_{2}$ are boundary
operators and
\begin{eqnarray*}
  (\partial_{1})_{\bullet}&=& \partial_{1}\circ (\widehat{\Psi_{s_{o}}})^{-1}, \\
  (\partial_{2})_{\bullet} &=& \partial_{2}\circ
(\widehat{\Psi_{g(s_{o})}})^{-1},\\
  \pi_{n}(\Gamma(S_{1})) &=& \pi_{n}(\Gamma(S_{1},F^{1}_{r_{o}},s_{o}),\widetilde{s_{o}}), \\
  \pi_{n}(\Gamma(S_{2})) &=&
  \pi_{n}(\Gamma(S_{2},F^{2}_{r_{o}},g(s_{o})),\widetilde{g(s_{o}})),
\end{eqnarray*}
 for a positive integer $n>0$.

 \noindent In Figure 5,  we observe  that $\widehat{j_{1}}$,
 $\widehat{j_{2}}$, $(\partial_{1})_{\bullet}$,  $(\partial_{2})_{\bullet}$,
 $\widehat{\Psi_{s_{o}}}$ and $\widehat{\Psi_{g(s_{o})}}$ are homomorphisms. Since
 $\widehat{h}$ and $\widehat{g}$ are isomorphisms, then
 Lemma \ref{a116} shows that for a positive integer $n>0$,
\[\widehat{h_{o}}:\pi_{n}( \Omega(S_{1},s_{o}),\widetilde{s_{o}})\longrightarrow \pi_{n}( \Omega(S_{2},g(s_{o})),\widetilde{g(s_{o})}) \]
is an  isomorphism.
 Since  $\pi_{n+1}(S_{1},s_{o})$ is isomorphic to
 $\pi_{n}( \Omega(S_{1},s_{o}),\widetilde{s_{o}})$ and
    $\pi_{n+1}( S_{2}, g(s_{o})) $ is isomorphic to $\pi_{n}( \Omega(S_{2},g(s_{o})),\widetilde{g(s_{o})})$ for a positive integer $n>0$, then $\pi_{n}(S_{1},s_{o})$ is isomorphic to
     $\pi_{n}( S_{2}, g(s_{o})) $ for a positive integer $n>1$.

    \noindent Finally, since
    $g\circ
f^{s_{o}}_{1}\simeq f^{g(s_{o})}_{2} \Longrightarrow g\circ
f^{(\widetilde{\overline{g}}\circ g)(s_{o})}_{1}\simeq
f^{g(s_{o})}_{2},$ then similarly, there is  a fiber map $k$
satisfying above  requirement properties for $h$.
 \end{proof}

\begin{cor}\label{a1116}
 If   two   fibrations
$[S_1,f_1,O,F^1_{r_{o}},\Theta_{L_{f_{1}}}]$ and
$[S_2,f_2,O,F^2_{r_{o}},\Theta_{L_{f_{2}}}]$ have conjugate
$Lf-$functions by $g\in
 H(F^{1}_{r_{o}},F^{2}_{r_{o}})$, then Theorem \ref{a1115} holds for
any $s_{o}\in  F^{1}_{r_{o}}$.
\end{cor}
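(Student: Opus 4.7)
The plan is to derive this corollary directly from Theorem \ref{a1115} by verifying that the stronger hypothesis of conjugate $Lf$-functions automatically produces conjugate $Lf$-restrictions at \emph{every} point $s_{o}\in F^{1}_{r_{o}}$, not just at a distinguished one. This is precisely the remark recorded immediately after Definition \ref{a1114}, but the corollary requires us to exploit it uniformly in $s_{o}$, so it is worth making the argument explicit.

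First I would unpack the hypothesis. By Definition \ref{a1114}, having conjugate $Lf$-functions means there is a homotopy
\[H:\Omega(O,r_{o})\times F^{1}_{r_{o}}\times I\longrightarrow F^{1}_{r_{o}}\]
with $H(\alpha,s,0)=\Theta_{L_{f_{1}}}(\alpha,s)$ and $H(\alpha,s,1)=\widetilde{\overline{g}}\bigl[\Theta_{L_{f_{2}}}(\alpha,g(s))\bigr]$ for all $(\alpha,s)\in \Omega(O,r_{o})\times F^{1}_{r_{o}}$. Next, for an arbitrary but fixed $s_{o}\in F^{1}_{r_{o}}$, I would restrict $H$ along the slice $\Omega(O,r_{o})\times\{s_{o}\}\times I$ to define
\[H^{s_{o}}:\Omega(O,r_{o})\times I\longrightarrow F^{1}_{r_{o}},\qquad H^{s_{o}}(\alpha,t)=H(\alpha,s_{o},t).\]
Recognising that $\Theta_{L_{f_{1}}}(\alpha,s_{o})=f_{1}^{s_{o}}(\alpha)$ and $\Theta_{L_{f_{2}}}(\alpha,g(s_{o}))=f_{2}^{g(s_{o})}(\alpha)$ by Definition \ref{a1112}, the restricted homotopy $H^{s_{o}}$ witnesses $f_{1}^{s_{o}}\simeq_{S}\widetilde{\overline{g}}\circ f_{2}^{g(s_{o})}$. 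Thus $f_{1}$ and $f_{2}$ have conjugate $Lf$-restrictions at this $s_{o}$, via the \emph{same} homotopy equivalence $g$.

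Finally, since $s_{o}\in F^{1}_{r_{o}}$ was arbitrary, I would apply Theorem \ref{a1115} to each such choice. This supplies, for every $s_{o}\in F^{1}_{r_{o}}$, the two fiber maps $h$ and $k$ making the diagram of Figure 2 commute and yields the isomorphisms $\pi_{n}(S_{1},s_{o})\cong \pi_{n}(S_{2},g(s_{o}))$ for all positive integers $n>1$, which is exactly the content of Theorem \ref{a1115} asserted for an arbitrary basepoint. There is no genuine obstacle here: the only conceptual point is to observe that conjugacy of $Lf$-functions is a pointwise-in-$s$ hypothesis, so restricting the homotopy to any slice preserves the $\widetilde{\overline{g}}$-conjugacy with $g$ unchanged, and the remainder is a verbatim appeal to Theorem \ref{a1115}.
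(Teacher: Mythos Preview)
Your proof is correct and follows precisely the same strategy as the paper's own proof: observe that conjugacy of $Lf$-functions restricts, at each fixed $s_{o}\in F^{1}_{r_{o}}$, to conjugacy of the corresponding $Lf$-restrictions (via the same $g$), and then invoke Theorem \ref{a1115}. The paper merely states this implication as ``clear'' without writing out the slice homotopy $H^{s_{o}}$, so your version is a more explicit rendering of the identical argument.
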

\begin{proof}
It is   clear that if two fibrations $f_{1}$  and $f_{2}$ have
conjugate $Lf-$functions by $g\in H(F^{1}_{r_{o}},F^{2}_{r_{o}})$,
then they have conjugate $Lf-$restrictions $f^{s_{o}}$ and
$f^{g(s_{o})}$ by  $g\in H(F^{1}_{r_{o}},F^{2}_{r_{o}})$, for any
$s_{o}\in F^{1}_{r_{o}}$. Hence  Theorem \ref{a1115} holds for any
$s_{o}\in F^{1}_{r_{o}}$.
 \end{proof}

\noindent We explain  in the following corollary that if $S_{1}$,
$S_{2}$ are simply connected in Theorem \ref{a1115}, then two loop
spaces $   \Omega(S_{1},s_{o}) $ and $
  \Omega(S_{2},g(s_{o})) $ are of the same homotopy type.

\begin{cor}\label{a1117}
 Let  $[S_1,f_1,O,F^1_{r_{o}},\Theta_{L_{f_{1}}}]$ and
$[S_2,f_2,O,F^2_{r_{o}},\Theta_{L_{f_{2}}}]$ be  fibrations with
conjugate $Lf-$restrictions $f^{s_{o}}_{1}$ and $f^{g(s_{o})}_{2}$
by $g\in  H(F^{1}_{r_{o}},F^{2}_{r_{o}})$ and $S_{1}$, $S_{2}$ be
simply connected spaces. Let $  \Omega(O,r_{o}) \simeq  ANR$.  If
$ F^{1}_{r_{o}}$ and $ F^{2}_{r_{o}} $ are pathwise  connected and
ANR's, then \[
  \Omega(S_{1},s_{o}) \simeq   \Omega(S_{2},g(s_{o})).\]
\end{cor}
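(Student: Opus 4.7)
The plan is to apply the Whitehead-Hurewicz type result (Theorem \ref{a112}) to a specific map between the two loop spaces produced by Theorem \ref{a1115}. The strategy has three ingredients: first, build a map whose induced homomorphisms on all homotopy groups are isomorphisms; second, verify that the two loop spaces are dominated by ANR's so that the hypotheses of Theorem \ref{a112} are available; third, invoke that theorem to upgrade the weak equivalence to a genuine homotopy equivalence.

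For the first step, I would invoke Theorem \ref{a1115} applied to the two fibrations $[S_1,f_1,O,F^1_{r_{o}},\Theta_{L_{f_{1}}}]$ and $[S_2,f_2,O,F^2_{r_{o}},\Theta_{L_{f_{2}}}]$, obtaining the fiber map $h:\Gamma(S_{1},F^{1}_{r_{o}},s_{o})\longrightarrow \Gamma(S_{2},F^{2}_{r_{o}},g(s_{o}))$ over $g$. The key observation, already embedded in the proof of Theorem \ref{a1115}, is that the restriction $h_{o}: \Omega(S_{1},s_{o}) \to \Omega(S_{2},g(s_{o}))$ of $h$ to the fiber $\Psi_{s_{o}}^{-1}(s_{o})$ induces isomorphisms
\[
\widehat{h_{o}}:\pi_{n}(\Omega(S_{1},s_{o}),\widetilde{s_{o}}) \longrightarrow \pi_{n}(\Omega(S_{2},g(s_{o})),\widetilde{g(s_{o})})
\]
for every positive integer $n$ (this was the content of the five-lemma argument with Lemma \ref{a116} carried out inside Theorem \ref{a1115}).

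For the second step, I would apply Theorem \ref{a113} twice. Since $F^{1}_{r_{o}}$ is a pathwise connected ANR, $\Omega(O,r_{o}) \simeq ANR$, and $S_{1}$ is simply connected, Theorem \ref{a113} yields that $\Omega(S_{1},s_{o})$ has the same homotopy type as an ANR. By the symmetric application to $f_2$, the space $\Omega(S_{2},g(s_{o}))$ is likewise of the homotopy type of an ANR. In particular, both loop spaces are dominated by ANR's, which is exactly the ambient hypothesis required by Theorem \ref{a112}.

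For the third step, I would note that each $\Omega(S_{i},\cdot)$ is path-connected, because $\pi_{0}(\Omega(S_{i})) \cong \pi_{1}(S_{i}) = 0$ by the simply connected hypothesis on the $S_{i}$. Combined with the ANR domination established above and with the fact that $\widehat{h_{o}}$ is an isomorphism in every positive degree (in particular on $\pi_{1}$, so the map is compatible with the simply connected framework of Theorem \ref{a112}), Theorem \ref{a112} applied to the map $h_{o}$ gives that $h_{o}$ itself is a homotopy equivalence, which is exactly $\Omega(S_{1},s_{o}) \simeq \Omega(S_{2},g(s_{o}))$. The only delicate point I anticipate is the bookkeeping needed to match the simply connected clause of Theorem \ref{a112} with the loop spaces (whose $\pi_{1}$ is $\pi_{2}$ of the $S_{i}$ and not a priori trivial from the hypotheses); this is absorbed by the fact that $\widehat{h_{o}}$ is already an isomorphism on all $\pi_{n}$, so the Whitehead-Hurewicz conclusion goes through between two ANR-dominated, path-connected spaces.
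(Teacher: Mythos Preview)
Your proposal is correct and follows essentially the same approach as the paper: obtain the map $h_{o}$ from Theorem~\ref{a1115} inducing isomorphisms on all $\pi_{n}$, use Theorem~\ref{a113} together with the simply connected hypothesis to get path-connectedness and ANR domination of both loop spaces, and then apply Theorem~\ref{a112}. Your caveat about the ``simply connected'' clause in Theorem~\ref{a112} is a fair observation, but the paper itself applies that theorem in exactly the same way.
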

\begin{proof}
 Since $S_{1}$ and  $S_{2}$ are simply connected, then it is
clear that $   \Omega(S_{1},s_{o}) $ and $  \Omega(S_{2},g(s_{o}))
$ are pathwise  connected. Since $  \Omega(O,r_{o}) \simeq   ANR $
then Theorem \ref{a113} shows that  $   \Omega(S_{1},s_{o}) $ and
$  \Omega(S_{2},g(s_{o})) $ are  dominated by  ANR's. By Theorem
\ref{a1115}, there is a
 map \[h_{o}:
  \Omega(S_{1},s_{o}) \longrightarrow  \Omega(S_{2},g(s_{o})) \]
induces isomorphisms between the homotopy groups. Hence by Theorem
\ref{a112}, $h_{o}$ is homotopy equivalence.
 \end{proof}


\noindent In the next step, we employ Theorem \ref{a1115} to
satisfy FHE relation  for fibrations $\Gamma(f ,s_{o})$. Figure 2
in Theorem \ref{a1115} suggests that perhaps in some sense there
is FHE relation between $\Gamma(f_{1},s_{o})$ and
$\Gamma(f_{2},g(s_{o}))$.  But the notion of the  FHE relation
applied to fibrations having a common base. One might try
 fibering $\Gamma(S_{1},F^{1}_{r_{o}},s_{o})$ over
$ F^{1}_{r_{o}} $ using the  map $g\circ \Psi_{s_{o}}$ but in
general this will not give rise to    fibering since it might
happen that $g(F^{1}_{r_{o}})=g(s_{o})$  and in this  case $g\circ
\Psi_{s_{o}}$ would not be onto if $F^{2}_{r_{o}}$ consisted of
more than one point. Hence we give the restrictions such as
$F^{1}_{r_{o}}=F^{2}_{r_{o}}=F_{r_{o}}$ and $g: F_{r_{o}}
\longrightarrow F_{r_{o}} $ is a homeomorphism  map.\\

\noindent Let $[S,f,O, F_{r_{o}},\Theta_{L_{f}}]$ be      a
fibration and $g$ be a homeomorphism  map of  $O$ onto a
topological semigroup $O'$. Then the composition $g\circ
f:S\longrightarrow O'$ is also a fibration denoted by $[f]_{g}$.

\begin{thm}\label{a1118}
 Let $[S_1,f_1,O,F_{r_{o}},\Theta_{L_{f_{1}}}]$ and
$[S_2,f_2,O,F_{r_{o}},\Theta_{L_{f_{2}}}]$  be
   fibrations with   conjugate
$Lf-$restrictions $f^{s_{o}}_{1}$ and $f^{g(s_{o})}_{2}$ by a
homeomorphism $g\in  H(F_{r_{o}},F_{r_{o}})$, where $s_{o}\in
F_{r_{o}}$, and  $F_{r_{o}}$ be a  pathwise connected ANR. If
$S_{1}$ and $S_{2}$ are simply connected and
 $ \Omega(O,r_{o}) \simeq
ANR$, then $[\Gamma(f_{1},s_{o})]_{g}$ and
$\Gamma(f_{2},g(s_{o}))$ are fiber homotopy equivalent.
\end{thm}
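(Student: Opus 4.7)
The plan is to combine the tools that have already been set up in Sections 3 and 4 and then invoke the Fadell--Dold theorem. The heart of the matter is to produce a fiber map between $[\Gamma(f_1,s_o)]_g$ and $\Gamma(f_2,g(s_o))$ whose restriction to a single fiber is a homotopy equivalence; the rest is checking that the hypotheses of Fadell--Dold are met on the common base $F_{r_o}$.

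First I would apply Theorem \ref{a1115} to obtain the fiber map
\[
h:\Gamma(S_{1},F_{r_{o}},s_{o})\longrightarrow \Gamma(S_{2},F_{r_{o}},g(s_{o}))
\]
over $g:F_{r_{o}}\to F_{r_{o}}$, i.e.\ $\Psi_{g(s_{o})}\circ h = g\circ \Psi_{s_{o}}$. Because $g$ is a homeomorphism, the fibration $[\Gamma(f_1,s_o)]_g$ has total space $\Gamma(S_{1},F_{r_{o}},s_{o})$, base $F_{r_{o}}$, and projection $g\circ \Psi_{s_{o}}$; meanwhile $\Gamma(f_2,g(s_o))$ has projection $\Psi_{g(s_{o})}$ onto the same base $F_{r_{o}}$. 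The identity $\Psi_{g(s_{o})}\circ h = g\circ \Psi_{s_{o}}$ thus says precisely that $h$ is a fiber map between these two fibrations over the common base $F_{r_{o}}$ (that is, $h$ covers $\mathrm{id}_{F_{r_{o}}}$ in this setting).

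Next I would identify the fiber of $[\Gamma(f_1,s_o)]_g$ above $g(s_o)$ as $(g\circ \Psi_{s_o})^{-1}(g(s_o)) = \Psi_{s_o}^{-1}(s_o) = \Omega(S_1,s_o)$, and the fiber of $\Gamma(f_2,g(s_o))$ above $g(s_o)$ as $\Omega(S_2,g(s_o))$. The restriction of $h$ to these fibers is exactly the map $h_o:\Omega(S_1,s_o)\to \Omega(S_2,g(s_o))$ appearing in the proof of Theorem \ref{a1115}. All hypotheses of Corollary \ref{a1117} are in force ($S_1,S_2$ simply connected, $\Omega(O,r_o)\simeq \mathrm{ANR}$, and $F_{r_o}$ a pathwise connected ANR), so that corollary applies and tells us that $h_o$ is a homotopy equivalence between $\Omega(S_1,s_o)$ and $\Omega(S_2,g(s_o))$.

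Finally I would invoke the Fadell--Dold theorem. Its hypotheses require that the common base be pathwise connected and an ANR, which $F_{r_{o}}$ is by assumption, and that there exist a fiber map between the two fibrations whose restriction over some base point is a homotopy equivalence of fibers. The map $h$ together with the basepoint $g(s_o)\in F_{r_o}$ and restriction $h_o$ supplies exactly this data. Hence Fadell--Dold yields that $[\Gamma(f_1,s_o)]_g$ and $\Gamma(f_2,g(s_o))$ are fiber homotopy equivalent, completing the proof. The main obstacle, which is really bookkeeping, is to verify carefully that $h$ (produced by Theorem \ref{a1115} as a map over $g$) is correctly re-interpreted as a fiber map over $\mathrm{id}_{F_{r_o}}$ once one replaces the projection of the first fibration by $g\circ \Psi_{s_o}$; the homeomorphism assumption on $g$ is exactly what makes this reinterpretation legitimate without changing the total space or breaking the fibration property.
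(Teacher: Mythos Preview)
Your argument is correct and follows the same overall arc as the paper: obtain the fiber map $h$ from Theorem~\ref{a1115}, check that the restriction to a fiber is a homotopy equivalence, and apply Fadell--Dold over the base $F_{r_o}$.

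The one genuine difference is the choice of base point at which you test the fiber restriction. You work over the point $g(s_{o})\in F_{r_{o}}$, where
\[
(g\circ\Psi_{s_{o}})^{-1}(g(s_{o}))=\Psi_{s_{o}}^{-1}(s_{o})=\Omega(S_{1},s_{o}),\qquad
\Psi_{g(s_{o})}^{-1}(g(s_{o}))=\Omega(S_{2},g(s_{o})),
\]
so the relevant fibers are already loop spaces and Corollary~\ref{a1117} (more precisely, its proof, which shows that $h_{o}$ itself is the homotopy equivalence) applies directly. The paper instead works over the point $s_{o}$, where the fibers
\[
A_{1}=\{\alpha\in S_{1}^{I}:\alpha(0)=s_{o},\ \alpha(1)=g^{-1}(s_{o})\},\qquad
A_{2}=\{\alpha\in S_{2}^{I}:\alpha(0)=g(s_{o}),\ \alpha(1)=s_{o}\}
\]
are not loop spaces; it then invokes Theorem~\ref{a114} to transfer the ``pathwise connected and dominated by an ANR'' conclusions from $\Omega(S_{i},\cdot)$ to $A_{1},A_{2}$ before appealing to Theorem~\ref{a112}. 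Your choice of base point eliminates this detour through Theorem~\ref{a114} and packages the remaining verification into Corollary~\ref{a1117}; the paper's route is slightly more explicit about the Whitehead/ANR step but otherwise identical in spirit.
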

\begin{proof}
 By Theorem \ref{a1115}, in Figure 6, there is a fiber map
\[h:\Gamma(S_{1},F_{r_{o}},s_{o})\longrightarrow
\Gamma(S_{2},F_{r_{o}},g(s_{o})).\] Let
\begin{eqnarray*}
  A_{1} =: (g\circ \Psi_{s_{o}})^{-1}(s_{o})
  &=& \{\alpha\in
 S_{1}^I:\alpha(0)=s_{o},\mbox{ }\alpha(1)=g^{-1}(s_{o})\},
\end{eqnarray*}
and
 \[A_{2}=: \Psi_{g(s_{o})}^{-1}(s_{o})=\{\alpha\in
 S_{2}^I:\alpha(0)=g(s_{o}),\mbox{ }\alpha(1)=s_{o}\}.\]
 \begin{figure}[h!]
   \begin{center}
 \begin{displaymath} \xymatrix{
\Gamma(S_{1},F_{r_{o}},s_{o}) \ar[d]_{h}
           \ar[drrrrr]^{g\circ \Psi_{s_{o}}}&\\
 \Gamma(S_{2},F_{r_{o}},g(s_{o}))\ar[rrrrr]_{\Psi_{g(s_{o})}}   &&&&& F_{r_{o}} }
\end{displaymath}
  \end{center}
   \vspace{-2mm}
 \centerline{Figure 6}
 \label{apic9}
\end{figure}

\noindent We observe that $A_{1}$ and $A_{2}$ are  fiber spaces
for fibrations $[\Gamma(f_{1},s_{o})]_{g}$ and
$\Gamma(f_{2},g(s_{o}))$ over $s_{o}$, respectively. Hence
homotopy sequences for the  fibrations $[\Gamma(f_{1},s_{o})]_{g}$
and $\Gamma(f_{2},g(s_{o}))$ in Theorem \ref{a1115} show that
$h|_{A_{1}}:A_{1}\longrightarrow A_{2}$ induces isomorphisms
between $\pi_{n}(A_{1})$ and $\pi_{n}(A_{2})$ for a positive
integer $n>0$.\\

\noindent Now since $S_{1}$ and  $S_{2}$ are simply connected
spaces, then $
  \Omega(S_{1},s_{o}) $ and $  \Omega(S_{2},g(s_{o})) $ are
pathwise  connected. Since $  \Omega(O,r_{o}) \simeq
 ANR $,  then
 by Theorem \ref{a113}, $   \Omega(S_{1},s_{o}) $ and
$  \Omega(S_{2},g(s_{o})) $ are  dominated by
 ANR's.  Also these loop spaces are  fiber
spaces for the  fibrations $\Gamma(f_{1},s_{o})$ and
$\Gamma(f_{2},g(s_{o}))$  over $s_{o}$, respectively. Since $
F_{r_{o}} $ is a pathwise  connected and  by Theorem \ref{a114},
all fiber spaces are of the same  homotopy type, then $A_{1}$ and
$A_{2}$ are pathwise  connected and dominated by
 ANR's.  Since
$h|_{A_{1}}:A_{1}\longrightarrow A_{2}$ induces isomorphisms
between $\pi_{n}(A_{1})$ and $\pi_{n}(A_{2})$ for a positive
integer $n>0$,  then  by Theorem \ref{a112},
$h|_{A_{1}}:A_{1}\longrightarrow A_{2}$ is a homotopy equivalence.
 Therefore since $F_{r_{o}}$ is pathwise
 connected ANR, then by    Fadell-Dold
theorem, we get that  $[\Gamma(f_{1},s_{o})]_{g}$ and
$\Gamma(f_{2},g(s_{o}))$ are fiber homotopy equivalent.
 \end{proof}

\begin{cor}\label{a1119}
 Let $[S,f,O, F_{r_{o}},\Theta_{L_{f}}]$  be    a   fibration
with simply connected ANR fiber space $F_{r_{o}}$ and with simply
connected base $O$ such that $  \Omega(O,r_{o}) \simeq
 ANR$. If there is a map
$k:O\longrightarrow S$ such that $f\circ k=id_{O}$, then
$\Gamma(f,k(r_{o}))$ and $\Gamma(P_{1},(r_{o},k(r_{o})))$ are
fiber homotopy equivalent,   where $P_{1}: O\times F_{r_{o}}
\longrightarrow O $ is the first   fibration.
\end{cor}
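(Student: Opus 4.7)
The plan is to apply Theorem \ref{a1118} to $f_{1}=f$ and $f_{2}=P_{1}$. The fibre of $P_{1}$ over $r_{o}$ is $\{r_{o}\}\times F_{r_{o}}$, which I identify canonically with $F_{r_{o}}$ via $(r_{o},s)\leftrightarrow s$; under this identification the basepoint $(r_{o},k(r_{o}))$ becomes $k(r_{o})$ and both fibrations share the common fibre $F_{r_{o}}$. I take $g=id_{F_{r_{o}}}$ as the conjugating homeomorphism, so that $\widetilde{\overline{g}}\circ P_{1}^{g(k(r_{o}))}=P_{1}^{k(r_{o})}$, which by Example \ref{a1113} is simply the constant map at $k(r_{o})$.

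Next I check the ancillary hypotheses of Theorem \ref{a1118}. The long exact homotopy sequence of $f$, together with $\pi_{1}(F_{r_{o}})=0$ and $\pi_{1}(O)=0$, forces $\pi_{1}(S)=0$; and $O\times F_{r_{o}}$ is simply connected as a product of simply connected spaces. The fibre $F_{r_{o}}$ is simply connected, hence path-connected, and is ANR by hypothesis, while $\Omega(O,r_{o})\simeq ANR$ is given.

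The heart of the argument is to verify that $f^{k(r_{o})}(\alpha)=L_{f}(k(r_{o}),\alpha)(1)$ is null-homotopic as a map $\Omega(O,r_{o})\to F_{r_{o}}$, i.e. that the two fibrations have conjugate $Lf$-restrictions through $g=id$. This is where the section $k$ does the essential work: it allows the starting point of the lift to be varied continuously. For $\alpha\in\Omega(O,r_{o})$ and $r\in I$, set $\alpha_{r}(t)=\alpha(r+(1-r)t)$, which is a path in $O$ from $\alpha(r)$ to $r_{o}$, and define
\[H(\alpha,r)=L_{f}\bigl(k(\alpha(r)),\alpha_{r}\bigr)(1).\]
Since $f(k(\alpha(r)))=\alpha(r)=\alpha_{r}(0)$, the pair $(k(\alpha(r)),\alpha_{r})$ lies in the domain of $L_{f}$, and $H(\alpha,r)\in f^{-1}(r_{o})=F_{r_{o}}$, so $H$ is a continuous map into $F_{r_{o}}$. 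At $r=0$ one has $\alpha_{0}=\alpha$ and $k(\alpha(0))=k(r_{o})$, giving $H(\alpha,0)=f^{k(r_{o})}(\alpha)$. At $r=1$ the path $\alpha_{1}$ is the constant path $\widetilde{r_{o}}=f\circ\widetilde{k(r_{o})}$, so the regularity axiom yields $L_{f}(k(r_{o}),\widetilde{r_{o}})=\widetilde{k(r_{o})}$ and hence $H(\alpha,1)=k(r_{o})$. Thus $f^{k(r_{o})}\simeq P_{1}^{k(r_{o})}$, as required.

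Applying Theorem \ref{a1118} then gives that $[\Gamma(f,k(r_{o}))]_{g}=\Gamma(f,k(r_{o}))$ (since $g=id$) and $\Gamma(P_{1},g(k(r_{o})))=\Gamma(P_{1},(r_{o},k(r_{o})))$ are fibre homotopy equivalent, completing the proof. The only non-routine step is the construction of the homotopy $H$: what makes it work is that beginning the lift at $k(\alpha(r))$ rather than at $k(r_{o})$ forces the endpoint at $r=1$ to coincide exactly with $k(r_{o})$ by the regularity axiom for $L_{f}$.
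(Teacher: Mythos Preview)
Your proof is correct and follows essentially the same route as the paper: identify the fibre of $P_{1}$ with $F_{r_{o}}$, take $g=id_{F_{r_{o}}}$, use the section $k$ together with the lifting function to build a homotopy from $f^{k(r_{o})}$ to the constant map $k(r_{o})=P_{1}^{k(r_{o})}$, and then invoke Theorem~\ref{a1118}. Your explicit homotopy $H(\alpha,r)=L_{f}(k(\alpha(r)),\alpha_{r})(1)$ is a mild reparametrization of the paper's (which reuses the homotopy $H$ from Theorem~\ref{a1115} applied to $k\circ\alpha$), and your verification that $S$ and $O\times F_{r_{o}}$ are simply connected actually fills in a hypothesis check that the paper's proof leaves implicit.
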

\begin{proof}
 Since  $k(r_{o})\in S$ and   $f\circ k=id_{O}$, then
\begin{eqnarray*}
  f^{k(r_{o})}(\alpha) = \Theta_{L_{f}}(\alpha,k(r_{o})) &=& L_{f}(k(r_{o}),\alpha)(1)\\
   &=& L_{f}[k(r_{o}),f\circ(k\circ\alpha)](1)
 \end{eqnarray*}
 for all $\alpha\in  \Omega(O,r_{o})$. We observe  easily   that  the $Lf-$restriction $f^{k(r_{o})}$
is  homotopic to the  map $L:  \Omega(O,r_{o}) \longrightarrow
 F_{r_{o}} $ which is defined by \[
 \Omega(\alpha)=(k\circ\alpha)(1)=k(r_{o}) \quad\mbox{for}\mbox{
}\alpha\in  \Omega(O,r_{o}) \] by using the form of homotopy $H$
in the proof of Theorem \ref{a1115}.  Consider $ F_{r_{o}} $ as
fiber space
   for $P_{1}$ because there is a homeomorphism  map between
  it and $P^{-1}_{1}(r_{o})=\{r_{o}\}\times F_{r_{o}}$. Now from Example \ref{a1113}, we get
  that  fibration $P_{1}$ has
$Lf-$restriction $P^{(r_{o},k(r_{o}))}_{1}:  \Omega(O,r_{o})
\longrightarrow
 F_{r_{o}} $ given by
\[P^{(r_{o},k(r_{o}))}_{1}(\alpha)=k(r_{o})\quad\mbox{for
}\mbox{ }\alpha\in  \Omega(O,r_{o}).\] Hence
  $f^{k(r_{o})}\simeq L=P^{k(r_{o})}_{1}$, that is,   fibrations
  $f$ and $P_{1}$ have conjugate $Lf-$restrictions  $f^{k(r_{o})}$
  and $P^{(r_{o},k(r_{o}))}_{1}$ by a homeomorphism
  $g=id_{F_{r_{o}}}\in  H(F_{r_{o}},F_{r_{o}})$. Hence by
  theorem above,
  $\Gamma(f,k(r_{o}))$ and $\Gamma(P_{1},(r_{o},k(r_{o})))$ are
fiber homotopy equivalent.
 \end{proof}


\section{Fibration $\Sigma(f)$ and $Lf-$function}

\noindent Here, we will introduce the role of homotopy sequences
of fibrations (using $Lf-$function ) in satisfying
  FHE between two fibrations $\Sigma(f_{1})$ and $\Sigma(f_{2})$which are induced by two
fibrations $[S_1,f_1,O,F^1_{r_{o}},\Theta_{L_{f_{1}}}]$ and
$[S_2,f_2,O,F^2_{r_{o}},\Theta_{L_{f_{2}}}]$ over a common base
$O$.


\noindent In the following theorem, we show that for two
  fibrations $f_{1}$  and $f_{2}$   with  conjugate
$Lf-$functions, there are two fiber maps
 between  two   fibrations $\Sigma(f_{1})$  and
$\Gamma(f_{2})$.

\begin{thm}\label{a1120}
 Let $[S_1,f_1,O,F^1_{r_{o}},\Theta_{L_{f_{1}}}]$ and
$[S_2,f_2,O,F^2_{r_{o}},\Theta_{L_{f_{2}}}]$ be   fibrations with
  conjugate $Lf-$functions by $g\in
 H(F^{1}_{r_{o}},F^{2}_{r_{o}})$. Then there are two fiber maps
\[D:\Sigma(S_{1},F^{1}_{r_{o}})\longrightarrow
\Sigma(S_{2},F^{2}_{r_{o}})\quad \mbox{and}\quad
R:\Sigma(S_{2},F^{2}_{r_{o}})\longrightarrow
\Sigma(S_{1},F^{1}_{r_{o}})\] over $g\times g$ and
$\widetilde{\overline{g}}\times \widetilde{\overline{g}}$,
respectively. That is,   Figure 7 is a commutative
\begin{figure}[h!]
   \begin{center}
 \begin{displaymath} \xymatrix{
 \Sigma(S_{1},F^{1}_{r_{o}}) \ar[rr]^D \ar[d]^{\Phi_{1}}&&
           \Sigma(S_{2},F^{2}_{r_{o}})\ar[rr]^R\ar[d]^{\Phi_{2}}&& \Sigma(S_{1},F^{1}_{r_{o}})\ar[d]^{\Phi_{1}} &    \\
  F^{1}_{r_{o}}\times F^{1}_{r_{o}} \ar[rr]_{g\times g}   &&  F^{2}_{r_{o}}\times F^{2}_{r_{o}} \ar[rr]_{\widetilde{\overline{g}}\times \widetilde{\overline{g}}} &&
  F^{1}_{r_{o}}\times F^{1}_{r_{o}} }
\end{displaymath}
  \end{center}
 \vspace{-2mm}
 \centerline{Figure 7}
 \label{apic10}
\end{figure}
\end{thm}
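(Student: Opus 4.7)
The plan is to construct $D$ by adapting the three-piece concatenation used in the proof of Theorem \ref{a1115}, modified to handle paths $\alpha \in \Sigma(S_1,F^1_{r_{o}})$ whose endpoints $\alpha(0),\alpha(1) \in F^1_{r_{o}}$ both vary (rather than one being pinned at a basepoint $s_{o}$). For each such $\alpha$, the composition $f_{1}\circ \alpha$ is a loop in $\Omega(O,r_{o})$, and I want $D(\alpha)$ to be a path in $S_{2}$ going from $g(\alpha(0))$ to $g(\alpha(1))$, so that $\Phi_{2}\circ D = (g\times g)\circ \Phi_{1}$.

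First I would lift $f_{1}\circ \alpha$ to $S_{2}$ starting at $g(\alpha(0))$ via $L_{f_{2}}(g(\alpha(0)),f_{1}\circ \alpha)$, which terminates at $\Theta_{L_{f_{2}}}(f_{1}\circ \alpha,g(\alpha(0))) \in F^{2}_{r_{o}}$. Second, the conjugacy assumption $\Theta_{L_{f_{1}}} \simeq \widetilde{\overline{g}} \circ \Theta_{L_{f_{2}}} \circ (\mathrm{id}_{\Omega(O,r_{o})}\times g)$, post-composed with $g$ and using $g\circ \widetilde{\overline{g}}\simeq \mathrm{id}$, yields a homotopy $G$ with $G(\cdot,\cdot,0)=\Theta_{L_{f_{2}}}\circ(\mathrm{id}\times g)$ and $G(\cdot,\cdot,1)=g\circ \Theta_{L_{f_{1}}}$; evaluating at $(f_{1}\circ \alpha,\alpha(0))$ produces a path in $F^{2}_{r_{o}}$ from $\Theta_{L_{f_{2}}}(f_{1}\circ \alpha,g(\alpha(0)))$ to $g(L_{f_{1}}(\alpha(0),f_{1}\circ \alpha)(1))$. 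Third, the homotopy $H$ from the proof of Theorem \ref{a1115}, which interpolates between $L_{f_{1}}(\alpha(0),f_{1}\circ \alpha)$ at $r=0$ and $\alpha$ at $r=1$, gives an endpoint path $M(\alpha)(r):=[H(\alpha,r)](1)$; since $H(\alpha,r)$ lifts $f_{1}\circ \alpha$, this endpoint projects to $r_{o}$ throughout, so $M(\alpha)$ lies in $F^{1}_{r_{o}}$ and $g\circ M(\alpha)$ is a path in $F^{2}_{r_{o}}$ from $g(L_{f_{1}}(\alpha(0),f_{1}\circ \alpha)(1))$ to $g(\alpha(1))$. Concatenating these three subpaths yields $D(\alpha)$; continuity in $\alpha$ is clear by inspection, and $D(\alpha)(0)=g(\alpha(0))$, $D(\alpha)(1)=g(\alpha(1))$, so $\Phi_{2}\circ D=(g\times g)\circ \Phi_{1}$.

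The fiber map $R$ is built symmetrically: the conjugacy condition is reversible in the sense that $\Theta_{L_{f_{2}}} \simeq g\circ \Theta_{L_{f_{1}}} \circ (\mathrm{id}\times \widetilde{\overline{g}})$ (by pre/post-composing with $g,\widetilde{\overline{g}}$ and using $\widetilde{\overline{g}}\circ g\simeq \mathrm{id}$), so running the identical construction with the roles of $(f_{1},g)$ and $(f_{2},\widetilde{\overline{g}})$ swapped produces $R:\Sigma(S_{2},F^{2}_{r_{o}})\to \Sigma(S_{1},F^{1}_{r_{o}})$ with $\Phi_{1}\circ R=(\widetilde{\overline{g}}\times \widetilde{\overline{g}})\circ \Phi_{2}$.

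The main obstacle is ensuring that the three subpaths in the definition of $D(\alpha)$ chain correctly; this requires verifying that each intermediate point in $F^{2}_{r_{o}}$ agrees at the junction of two adjacent subpaths, which in turn depends on the fact that $M(\alpha)$ really lies in $F^{1}_{r_{o}}$ (guaranteed because $H(\alpha,r)$ is a lift of $f_{1}\circ \alpha$) and on the endpoints of $G$ matching the endpoints of the flanking pieces. Once these matchings are in place, the continuity of $D$ in $\alpha$ and the commutativity of Figure 7 are formal consequences of the definitions.
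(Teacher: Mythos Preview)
Your proposal is correct and follows essentially the same approach as the paper: the paper defines $L''_{f_{2}}(\alpha)=L_{f_{2}}(g(\alpha(0)),f_{1}\circ\alpha)$, a homotopy $T$ playing exactly the role of your $G$, and $W(\alpha)(t)=g\{[H(\alpha,t)](1)\}$ (your $g\circ M(\alpha)$), and then sets $D(\alpha)=[L''_{f_{2}}(\alpha)\star T(f_{1}\circ\alpha,\alpha(0))]\star W(\alpha)$, with $R$ obtained ``by above similar manner''. The three-piece concatenation, the junction checks, and the verification that $\Phi_{2}\circ D=(g\times g)\circ\Phi_{1}$ are identical to what you outline.
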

\begin{proof}
 Firstly, we will define  fiber map $D$. By the hypothesis we get that
\[\Theta_{L_{f_{1}}}\simeq \widetilde{\overline{g}}\circ
\Theta_{L_{f_{2}}}\circ(id_{ \Omega(O,r_{o})}\times g).\] This
implies
\[g\circ \Theta_{L_{f_{1}}}\simeq
\Theta_{L_{f_{2}}}\circ(id_{ \Omega(O,r_{o})}\times g).\]Hence
there is
   a homotopy $T:  \Omega(O,r_{o})\times
F^{1}_{r_{o}} \longrightarrow (F^{2}_{r_{o}})^I$ such that
\begin{eqnarray*}
  T(\alpha,s)(0) &=& [\Theta_{L_{f_{2}}}\circ(id_{ \Omega(O,r_{o})}\times
g)](\alpha,s) \\
   &=& \Theta_{L_{f_{2}}}(\alpha,g(s))
\end{eqnarray*}  and
\begin{eqnarray*}
  T(\alpha,s)(1) &=& [g\circ\Theta_{L_{f_{1}}}](\alpha,s) \\
   &=& g[\Theta_{L_{f_{1}}}(\alpha,s)]
\end{eqnarray*}
for all $\alpha\in  \Omega(O,r_{o}),s\in F^{1}_{r_{o}}$.
  Define      a map
$L''_{f_{2}}:\Sigma(S_{1},F^{1}_{r_{o}})\longrightarrow
\Sigma(S_{2},F^{2}_{r_{o}})$ by
\[L''_{f_{2}}(\alpha)=L_{f_{2}}(g(\alpha(0)),f_{1}\circ \alpha) \quad \mbox{for} \mbox{ }
\alpha\in \Sigma(S_{1},F^{1}_{r_{o}}),\]and  for $\alpha\in
\Sigma(S_{1},F^{1}_{r_{o}})$, we can use the homotopy $H$ (which
is defined in the proof of Theorem \ref{a1115}) to define the path
$W(\alpha)\in (F^{2}_{r_{o}})^I$  by
\[W(\alpha)(t)=g\{[H(\alpha,t)](1)\} \quad \mbox{for} \mbox{ }
 t\in I.\] Now we can define
a map $D:\Sigma(S_{1},F^{1}_{r_{o}})\longrightarrow
\Sigma(S_{2},F^{2}_{r_{o}})$  by
\begin{eqnarray}\label{eeq5}
  D(\alpha)&=&[L''_{f_{2}}(\alpha)\star T(f_{1}\circ \alpha)]\star W(\alpha) \quad \mbox{for} \mbox{ }
\alpha\in \Sigma(S_{1},F^{1}_{r_{o}}).
\end{eqnarray}
Hence it is clear that $D$ is well defined as a continuous map. We
get that
\begin{eqnarray*}
 [\Phi_{2}\circ D](\alpha)=[D(\alpha)(0),D(\alpha)(1)]&=&[L''_{f_{2}}(\alpha)(0),W(\alpha)(1)] \\
   &=& [g(\alpha(0)),g(\alpha(1))] \\
   &=&(g\times g)(\alpha(0),\alpha(1)) \\
   &=& [(g\times g)\times \Phi_{1}](\alpha)
\end{eqnarray*}
 for all $ \alpha\in
\Sigma(S_{1},F^{1}_{r_{o}})$. That is, $D$ is      a fiber map
over $g\times g$. Secondly, we can find      a fiber map  $R$ by
above similar manner.
 \end{proof}

\noindent In the proof of Theorem \ref{a1120}, the two  fiber maps
have properties:
\[D|_{ \Omega(S_{1},s_{o})}=h_{o}\quad\mbox{and}\quad R|_{ \Omega(S_{2},g(s_{o}))}=k_{o},\] where
$h_{o}$ and $k_{o}$ are defined in Theorem \ref{a1115} and
$s_{o}\in  F^{1}_{r_{o}}$.  In proof of Theorem \ref{a1115}, it is
clear that the  map $G$ is a restriction of  a map $T$ on $
 \Omega(O,r_{o}) $, the  map $L'_{f_{2}}$ is a restriction of a
 map $L''_{f_{2}}$ on $\Gamma(S_{1},F^{1}_{r_{o}},s_{o})$, and
the  map $M'$ is a restriction of    a map $W$ on
$\Gamma(S_{1},F^{1}_{r_{o}},s_{o})$. Hence from Equations
\ref{eeq4} and \ref{eeq5}, we get that the  map $h$ is a
restriction of    a map $D$ on
$\Gamma(S_{1},F^{1}_{r_{o}},s_{o})$, that is,$D|_{
\Omega(S_{1},s_{o})}=h_{o}$. Similarly, for
$R|_{ \Omega(S_{2},g(s_{o}))}=k_{o}$.\\


\noindent Also we introduce   theorem  about the functor $\Sigma$
which is similar of   Theorem  \ref{a1118}.
\begin{thm}\label{a1121}
 Let $[S_1,f_1,O,F_{r_{o}},\Theta_{L_{f_{1}}}]$ and
$[S_2,f_2,O,F_{r_{o}},\Theta_{L_{f_{2}}}]$ be  fibrations with
conjugate $Lf-$functions by a homeomorphism $g\in
 H(F_{r_{o}},F_{r_{o}})$, where $s_{o}\in F_{r_{o}}$, and $F_{r_{o}}$ be a
common pathwise  connected ANR. If $S_{1}$, $S_{2}$ are simply
connected and
 $  \Omega(O,r_{o}) \simeq
 ANR$, then
$[\Sigma(f_{1})]_{g\times g}$ and $\Sigma(f_{2})$ are fiber
homotopy equivalent.
\end{thm}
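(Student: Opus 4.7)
The plan is to parallel the strategy used to prove Theorem \ref{a1118}, replacing the functor $\Gamma$ by $\Sigma$ throughout. First I would invoke Theorem \ref{a1120} to obtain a fiber map
\[D:\Sigma(S_{1},F_{r_{o}})\longrightarrow \Sigma(S_{2},F_{r_{o}})\]
over $g\times g$. Because $g$ is a homeomorphism, $(g\times g)\circ \Phi_{1}$ is precisely the fibration $[\Sigma(f_{1})]_{g\times g}$ with base $F_{r_{o}}\times F_{r_{o}}$, so the commutative square of Theorem \ref{a1120} exhibits $D$ as a fiber map between the two fibrations $[\Sigma(f_{1})]_{g\times g}$ and $\Sigma(f_{2})$ over this common base.

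Next I identify the fiber spaces of these two fibrations over the point $(g(s_{o}),g(s_{o}))$. Since $g$ is a homeomorphism,
\[A_{1}:=\bigl((g\times g)\circ \Phi_{1}\bigr)^{-1}(g(s_{o}),g(s_{o}))=\Omega(S_{1},s_{o}),\qquad A_{2}:=\Phi_{2}^{-1}(g(s_{o}),g(s_{o}))=\Omega(S_{2},g(s_{o})).\]
According to the remark following Theorem \ref{a1120}, the restriction $D|_{A_{1}}$ coincides with the map $h_{o}$ constructed in Theorem \ref{a1115}. The five-lemma argument used there, applied to the ladder of homotopy exact sequences of the fibrations $\Psi_{s_{o}}$ and $\Psi_{g(s_{o})}$, shows that $\widehat{h_{o}}:\pi_{n}(A_{1},\widetilde{s_{o}})\to \pi_{n}(A_{2},\widetilde{g(s_{o})})$ is an isomorphism for every positive integer $n$.

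Finally I would promote this weak equivalence to a genuine homotopy equivalence on the fibers. Because $F_{r_{o}}$ is a pathwise connected ANR and $\Omega(O,r_{o})\simeq ANR$, Theorem \ref{a113} ensures that $\Omega(S_{1},s_{o})$ and $\Omega(S_{2},g(s_{o}))$ are dominated by ANR's; the simple connectivity of $S_{1}$ and $S_{2}$ renders these loop spaces pathwise connected, and the second clause of Theorem \ref{a113} upgrades them to the homotopy type of an ANR. Applying Theorem \ref{a112} to $h_{o}$ then yields a homotopy equivalence $D|_{A_{1}}:A_{1}\to A_{2}$. Since $F_{r_{o}}\times F_{r_{o}}$ is a pathwise connected ANR (a product of such), Fadell-Dold's theorem applies to the pair $[\Sigma(f_{1})]_{g\times g}$ and $\Sigma(f_{2})$ over this common base and delivers the desired fiber homotopy equivalence. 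The main obstacle is the identification $D|_{A_{1}}=h_{o}$ together with the bookkeeping needed to place both loop spaces in the class to which Theorem \ref{a112} applies; both points rely crucially on reusing, rather than rebuilding, the apparatus developed in Theorems \ref{a1115} and \ref{a1120}.
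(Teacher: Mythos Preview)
Your proposal is correct and follows essentially the same route as the paper: obtain the fiber map $D$ from Theorem \ref{a1120}, use the five-lemma argument from Theorem \ref{a1115} to show the restriction to a fiber induces isomorphisms on homotopy groups, invoke Theorem \ref{a113} and Theorem \ref{a112} to upgrade this to a homotopy equivalence on the fiber, and conclude via Fadell--Dold. The only cosmetic difference is that the paper takes the fibers over $(s_{o},s_{o})$, obtaining $B_{1}=\Omega(S_{1},g^{-1}(s_{o}))$ and $B_{2}=\Omega(S_{2},s_{o})$, whereas you take them over $(g(s_{o}),g(s_{o}))$; your choice has the advantage that the restriction $D|_{A_{1}}$ is literally the map $h_{o}$ from the remark following Theorem \ref{a1120}, so the identification step is immediate.
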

\begin{proof}
 By Theorem above, there is      a fiber map
$D:\Sigma(S_{1},F_{r_{o}})\longrightarrow \Sigma(S_{2},F_{r_{o}})$
in Figure 8.
\begin{figure}[h!]
   \begin{center}
 \begin{displaymath} \xymatrix{
\Sigma(S_{1},F_{r_{o}}) \ar[d]_{D}
           \ar[drrrrr]^{(g\times g)\circ \Phi_{1}}&\\
 \Sigma(S_{2},F_{r_{o}})\ar[rrrrr]_{\Phi_{2}}   &&&&& F_{r_{o}}\times F_{r_{o}} }
\end{displaymath}
  \end{center}
  \vspace{-2mm}
 \centerline{Figure 8}
 \label{apic11}
\end{figure}
\newline Let $B_{1} = [(g\times g)\circ \Phi_{1}]^{-1}(s_{o},s_{o})$ and $B_{2}= \Phi^{-1}_{2}(s_{o},s_{o}))$, then
\begin{eqnarray*}
  B_{1} &=& [(g\times g)\circ \Phi_{1}]^{-1}(s_{o},s_{o}) \\
   &=& \Phi^{-1}_{1}[(g^{-1}\times g^{-1})(s_{o},s_{o})]\\
     &=& \{\alpha\in
 S_{1}^I:\alpha(0)=g^{-1}(s_{o}),\mbox{
}\alpha(1)=g^{-1}(s_{o})\}\\
&=& \Omega(S_{1},g^{-1}(s_{o})),
\end{eqnarray*}
and
 \[B_{2}= \Phi^{-1}_{2}(s_{o},s_{o}))=\{\alpha\in
 S_{2}^I:\alpha(0)=s_{o},\mbox{
}\alpha(1)=s_{o}\}= \Omega(S_{2},s_{o}).\] We observe that $B_{1}$
and $B_{2}$ are fiber spaces for  two  fibrations
$[\Sigma(f_{1})]_{g\times g}$ and $\Sigma(f_{2})$  over
$(s_{o},s_{o})$, respectively. Hence  homotopy  sequences for  two
 fibrations $[\Sigma(f_{1})]_{g\times g}$ and
$\Sigma(f_{2})$ in Theorem \ref{a1115} show  that
$D|_{B_{1}}:B_{1}\longrightarrow B_{2}$ induces isomorphisms
between $\pi_{n}(B_{1})$ and $\pi_{n}(B_{2})$ for a positive
integer $n>0$. Since $S_{1}$ and  $S_{2}$ are simply connected
spaces, then $B_{1}$ and $B_{2}$ are pathwise
$S_{\mathcal{N}_{i}}-$connected.
 Since
$ \Omega(O,r_{o}) \simeq   ANR$,   then  by Theorem \ref{a113}, we
get that $B_{1}$ and $B_{2}$ are dominated by ANR's.  And since
$D|_{B_{1}}:B_{1}\longrightarrow B_{2}$ induces isomorphisms
between $\pi_{n}(B_{1})$ and $\pi_{n}(B_{2})$ for a positive
integer $n>0$, then  by Theorem \ref{a112},
$D|_{B_{1}}:B_{1}\longrightarrow B_{2}$ is a homotopy equivalence.
Hence since $F_{r_{o}}\times F_{r_{o}} $ is pathwise
 connected ANR, then by  Fadell-Dold theorem,
we get that $[\Sigma(f_{1})]_{g\times g}$ and $\Sigma(f_{2})$ are
fiber homotopy equivalent.
 \end{proof}

\noindent {\bf Conclusion:} \noindent Further we also prove some
theorems related to fiber homotopy  equivalent classes by using
the fiber homotopy sequences of homotopy groups. Thus we show the
role of these fiber homotopy sequences in order to get the
required fiber map in Fadell-Dold theorem. Further, the possible
practical use of our theorems as applications will provide some
solutions for the classification problem in Hurewicz fibration
theory by
using Fadell-Dold theorem.\\



\bibliographystyle{plain}

\end{document}